\numberwithin{equation}{section}
\newtheorem{theorem}{Theorem}[section]
\newtheorem*{theorem*}{Theorem}
\newtheorem{lemma}[theorem]{Lemma}
\newcommand{\DD}{\mathbb{D}}
\newcommand{\C}{\mathbb{C}}
\newcommand{\RR}{\Rightarrow}
\newtheorem{proposition}[theorem]{Proposition}
\newtheorem{corollary}[theorem]{Corollary}
\theoremstyle{definition}
\newtheorem{definition}[theorem]{Definition}
\theoremstyle{remark}
\theoremstyle{definition}
\newtheorem*{exmp}{Example} 
\theoremstyle{definition}
\newtheorem*{question}{Question}
\begin{document}

\title[\resizebox{3.9in}{!}{  \large SUBINNER-FREE OUTER FACTORIZATIONS ON AN ANNULUS}]{\large SUBINNER-FREE OUTER FACTORIZATIONS ON AN ANNULUS}

\author[\resizebox{1.35in}{!}{\small GEORGIOS TSIKALAS}]{\small GEORGIOS TSIKALAS}
\thanks{Partially supported by National Science Foundation Grant DMS 2054199}

\address{DEPARTMENT OF MATHEMATICS AND STATISTICS, WASHINGTON UNIVERSITY IN ST. LOUIS, ST. LOUIS, MO, 63136}
\email{gtsikalas@wustl.edu} 
\subjclass[2010]{Primary: 47A63;\text{ } Secondary: 47A60, 47A25 } 
\keywords{subinner, free outer, complete Pick space, annulus. \\  \hspace*{0.34 cm} I thank Professor John E. M\raise.5ex\hbox{c}Carthy, my thesis advisor, for his invaluable help during the preparation of this work. I also thank the Onassis Foundation for providing financial support for my PhD studies.}
\small
\begin{abstract}
    \small
    Recent work of Aleman, Hartz,  M\raise.5ex\hbox{c}Carthy and Richter generalizes the classical inner-outer factorization of Hardy space functions to the complete Pick space setting, establishing  an essentially unique ``subinner-free outer" factorization. In this note, we investigate certain special examples of such factorizations in the setting of the function space induced on the annulus $A_r=\{r<|z|<1\}$ by the complete Pick kernel $$k_{r}(\lambda,\mu):=\frac{1-r^2}{(1-\lambda\bar{\mu})(1-r^2/\lambda\bar{\mu})}.$$

\end{abstract}
\maketitle

 \section{INTRODUCTION} 
 \large 
 
 Let $\mathcal{H}$ be a Hilbert function space induced by a \textit{complete Pick} kernel $k$. Well-known examples of such spaces on the unit disk $\DD$ are the Hardy space $H^2(\DD)$, where $k_w(z)=\frac{1}{1-z\bar{w}}$, and the Dirichlet space $D$, where $k_w(z)=\frac{1}{z\bar{w}}\log\frac{1}{1-z\bar{w}}$. Another example is the Drury-Arveson space $H^2_d$ of analytic functions on the $d$-dimensional unit ball $\mathbb{B}_d$, where $k_w(z)=\frac{1}{1-\langle z,w\rangle_{\C^d}}$ and $d\in\mathbb{N}\cup\{\infty\}$. In \cite{freeouterdraft}, Aleman, Hartz, M\raise.5ex\hbox{c}Carthy and Richter prove a unique factorization result for functions in $\mathcal{H}$, involving the two classes of \textit{subinner} and \textit{free outer} functions. In particular, they show that for every $f\in\mathcal{H}\setminus\{0\}$, there exists a unique (up to unimodular constants) \textit{subinner-free outer pair} $(\phi, h)$ such that 
 $$f=\phi h.$$
Here, the subinner factor $\phi$ is a contractive multiplier of $\mathcal{H}$ such that $||\phi h||=||h||,$ while the free outer factor $h$ belongs to a special subclass of the cyclic vectors of $\mathcal{H}$ (to be defined below). Note that this factorization is the same (without the uniqueness assertion) as the one considered by Jury and Martin in \cite{Weakprod}, \cite{JurySmirnov}. In these papers, the authors worked in the setting of the free Fock space $\mathcal{F}^2_d$ in $d$ variables, where a free inner-outer factorization is known to hold (see \cite{AriasPopescu}, \cite{DavidsonPitts}). Since there exists a natural isometric embedding of $\mathcal{H}$ into $\mathcal{F}^2_d$ for some $d\in\mathbb{N}\cup\infty$ (see Section \ref{123}), this free inner-outer factorization then also applies to functions in the embedded space $\mathcal{H}$. \par
Now, in the setting of the Hardy space $H^2(\mathbb{D}$), subinner functions coincide with the classical inner functions. In the general case, subinner functions form a ``large" class of contractive multipliers of $\mathcal{H}$; Theorem 1.9 in \cite{freeouterdraft} asserts that every $\phi$ in the unit ball of the multiplier algebra of  $\mathcal{H}$ is a pointwise limit of subinner functions. We note that it may even happen that every function in the space is, up to renormalization, a subinner multiplier! For instance, consider the Sobolev space $\mathcal{W}_1^2$ of those functions on $[0,1]$ that are absolutely continuous and whose derivatives are square-summable. Agler \cite{Sobolev} showed that $\mathcal{W}_1^2$ is a complete Pick space. Theorem 1.5 of the same paper implies that every $f\in\mathcal{W}_1^2$ is a norm-attaining multiplier of $\mathcal{W}_1^2,$ hence a constant multiple of a subinner function. 
 \par On the other hand, free outer functions are much harder to come by. While they coincide with the usual outer functions in the setting of $H^2(\mathbb{D})$, in general they form a strict subset of the cyclic vectors of $\mathcal{H}$. Also, the product of two free outer functions isn't necessarily free outer (see e.g. \cite[Example 11.1]{freeouterdraft}). Another perhaps surprising fact is the existence of nonconstant functions that are simultaneously subinner and free outer. Simple examples of such functions can be found even in the Dirichlet space $D$; see the remark after the proof of Theorem 14.9 in \cite{freeouterdraft}. 
 \par It is fair to say that free outer functions are not well-understood yet. The situation is exacerbated by a striking lack of examples; indeed, besides the familiar Hardy space setting, it seems that only the free outer factors of low-degree polynomials have been worked out explicitly so far. In this note, we enlarge our pool of available examples by investigating subinner-free outer factorizations in the setting of a certain complete Pick space on an annulus, which we now define.
 \par Let $\mathscr{H}^2(A_r)$ denote the Hilbert function space induced on the annulus $A_r=\{r<|z|<1\}$ by the kernel 
$$k_{r}(\lambda,\mu):=\frac{1-r^2}{(1-\lambda\bar{\mu})(1-r^2/\lambda\bar{\mu})}.$$
$\mathscr{H}^2(A_r)$ is known to be a complete Pick space (see \cite[p. 1137]{Rochberg}) and can also be naturally associated with the class of operators $\mathcal{F}_r:=\{T\in\mathcal{B}(H): r^2T^{-1}(T^{-1})^*+TT^*\le r^2+1,\hspace{0.08 cm}\sigma(T)\subset A_r\}$, as noted by the author in \cite{tsikalas2021von}. Our main result, proved in Section \ref{50}, is the following:
\begin{theorem} \label{100}
For every $f\in H^2(\DD)\subset \mathscr{H}^2(A_r),$ the classical inner-outer factorization in $H^2(\DD)$ coincides (up to multiplication by unimodular constants) with the subinner-free outer factorization of $f$ in $\mathscr{H}^2(A_r)$.  
An analogous result holds for functions in $H^2(\DD_0)\footnote{Here, $H^2(\DD_0)$ is the Hardy space over the unbounded disk $\DD_0=\{|z|>r\}$ centered at infinity.}\subset \mathscr{H}^2(A_r).$
\end{theorem}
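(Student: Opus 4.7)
The plan is to exhibit a convenient factorization of the reproducing kernel, use it to verify the subinner and free outer conditions directly on the classical inner-outer pair, and conclude by the uniqueness of the AHMR factorization.

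First, a rewriting of the kernel yields
$$k_r(\lambda,\mu)\;=\;k_{\DD}(\lambda,\mu)\cdot(1-r^2)\,k_{\DD}(r/\lambda,r/\mu),$$
where $k_\DD(\lambda,\mu)=1/(1-\lambda\bar\mu)$ is the Szeg\H{o} kernel. Two consequences follow. Expanding each factor as a geometric series identifies $\{z^n:n\ge 0\}\cup\{(r/z)^n:n\ge 1\}$ as an orthonormal basis of $\mathscr{H}^2(A_r)$, whence $H^2(\DD)$ and $H^2(\DD_0)$ sit inside $\mathscr{H}^2(A_r)$ as isometrically embedded subspaces indexed by the nonnegative and negative Laurent exponents respectively. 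Moreover, since $(1-\theta(\lambda)\overline{\theta(\mu)})\,k_\DD(\lambda,\mu)\ge 0$ as a kernel for every $\theta\in H^\infty(\DD)$ with $\|\theta\|_\infty\le 1$ (the classical Pick condition), the Schur product theorem gives $(1-\theta(\lambda)\overline{\theta(\mu)})\,k_r(\lambda,\mu)\ge 0$; hence every such $\theta$ is a contractive multiplier of $\mathscr{H}^2(A_r)$.

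Now fix $f\in H^2(\DD)\setminus\{0\}$ with classical inner-outer factorization $f=\theta u$. By the preceding paragraph, $M_\theta$ is a contractive multiplier of $\mathscr{H}^2(A_r)$, and the isometric embedding gives
$$\|\theta u\|_{A_r}=\|\theta u\|_{H^2(\DD)}=\|u\|_{H^2(\DD)}=\|u\|_{A_r},$$
so $\theta$ is subinner in $\mathscr{H}^2(A_r)$ with norm-attaining vector $u$. By the uniqueness of the subinner-free outer factorization, the first assertion of the theorem reduces to showing that $u$ is free outer in $\mathscr{H}^2(A_r)$.

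This last step is the main obstacle. The plan is to suppose $u=\Psi H$ is a subinner-free outer factorization in $\mathscr{H}^2(A_r)$, i.e., $\Psi$ is a contractive multiplier with $\|\Psi H\|_{A_r}=\|H\|_{A_r}=\|u\|_{H^2(\DD)}$, and to deduce that $\Psi$ is a unimodular constant. Since $u$ has no zeros in $\DD\supset A_r$, the multiplier $\Psi$ must be zero-free on $A_r$ and $H=u/\Psi$. I would then decompose $H=H_++H_-$ using the orthogonal splitting into nonnegative- and negative-index Laurent parts and exploit the equality $\|\Psi H\|_{A_r}=\|H\|_{A_r}$ via an explicit computation in the orthonormal basis above (modelled on the proof that $z$ is subinner, which shows $\|zf\|_{A_r}^2=\|f\|_{A_r}^2-(1-r^2)\|f_-\|_{A_r}^2$) to conclude that $H_-=0$, that $\Psi$ extends to an $H^\infty(\DD)$ function, and that $|\Psi|=1$ a.e.\ on $\partial\DD$. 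Classical uniqueness of inner-outer factorization in $H^2(\DD)$, applied to $u=\Psi H$ with $u$ outer and $H\in H^2(\DD)$, would then force $\Psi$ to be a unimodular constant. A potentially cleaner alternative is to transfer the problem to a Fock space $\mathcal{F}^2_d$ via the isometric embedding of $\mathscr{H}^2(A_r)$ recalled in the preliminaries, and invoke the Popescu/Jury--Martin free inner-outer factorization there, where the free outerness of the image of $u$ should be recoverable from its classical outerness in $H^2(\DD)$.

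Finally, for $f\in H^2(\DD_0)\subset\mathscr{H}^2(A_r)$, the involution $\tau(z)=r/z$ of $A_r$ satisfies $k_r(r/\lambda,r/\mu)=k_r(\lambda,\mu)$, so composition with $\tau$ is a unitary of $\mathscr{H}^2(A_r)$ which swaps the two embedded Hardy subspaces and, being an automorphism of the space, sends subinner (resp.\ free outer) functions to subinner (resp.\ free outer) functions. The $H^2(\DD_0)$ statement therefore follows from the $H^2(\DD)$ case by transport through this unitary.
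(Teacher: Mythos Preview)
Your overall strategy matches the paper's: show the classical inner factor $\theta$ is $\mathscr{H}^2(A_r)$-subinner (your Schur-product argument is exactly the paper's Lemma~3.1/Proposition~3.2), show the classical outer factor $u$ is $\mathscr{H}^2(A_r)$-free outer, and invoke uniqueness. The reduction of the free-outer step to analyzing the subinner/free-outer pair $u=\Psi H$ is also the paper's route. Your involution argument for the $H^2(\DD_0)$ case is correct and in fact tidier than the paper's ``analogous'' remark.

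The free-outer argument, however, has two genuine gaps. First, the equality $\|\Psi H\|_{A_r}=\|H\|_{A_r}$ by itself cannot force $H_-=0$: take $\Psi=1$ and $H$ arbitrary. Your $z$-shift identity is the right tool, but it must be combined with the \emph{additional} fact that $u=\Psi H\in H^2(\DD)$, so that $\|zu\|_{A_r}=\|u\|_{A_r}$; then
\[
\|H\|_{A_r}=\|u\|_{A_r}=\|zu\|_{A_r}=\|\Psi(zH)\|_{A_r}\le\|zH\|_{A_r}
=\sqrt{\|H\|_{A_r}^2-(1-r^2)\|H_-\|_{A_r}^2},
\]
whence $H_-=0$. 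This is precisely the paper's argument; your phrasing (``exploit $\|\Psi H\|=\|H\|$ via an explicit computation'') misidentifies which equality is doing the work.

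Second, and more seriously, even once $H\in H^2(\DD)$ you have not justified that $\Psi$ extends to $H^\infty(\DD)$ with $|\Psi|=1$ a.e.\ on $\partial\DD$. A~priori $\Psi\in H^\infty(A_r)$ only, and $H$ may vanish in $\{|z|\le r\}$, so $\Psi=u/H$ need not extend across the inner boundary; no ``explicit computation in the orthonormal basis'' will produce this. The paper sidesteps the issue entirely: since $H$ is the free outer factor of $u$, one has $P_u=P_H$ in $\mathscr{H}^2(A_r)$, which (via the isometric embedding) restricts to $P_u=P_H$ in $H^2(\DD)$; the extremal characterization of outer functions then gives $|u(z_0)|\ge|H(z_0)|$, contradicting $|H(z_0)|>|u(z_0)|$ (which would hold if $u$ were not itself free outer). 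If you prefer to stay on your route, you can finish as follows: from $\|u\|_{H^2}=\|H\|_{H^2}$ and $|\Psi|\le 1$ on $A_r$ one gets $|u|=|H|$ a.e.\ on $\partial\DD$; writing $H=I_H O_H$ classically and using that $u$ is outer forces $u/O_H$ to be an inner function equal to a unimodular constant, hence $\Psi I_H$ is constant on $A_r$, and $|\Psi|\le1$ together with $|I_H|\le1$ then makes $I_H$ constant. But this argument is not the one you sketched, and the Fock-space alternative you mention would require exactly the transference that is at issue.
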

Theorem \ref{100} tells us that a function in $\mathscr{H}^2(A_r)$ that is also analytic on the unit disk is $\mathscr{H}^2(A_r)$-free outer if and only if it is $H^2(\mathbb{D})$-outer. We show that a similar result holds for $\mathscr{H}^2(A_r)$-subinner functions that are also analytic on $\mathbb{D}.$
\begin{theorem} \label{69}
Suppose $\phi\in \text{Mult}(\mathscr{H}^2(A_r))\cap \text{Hol}(\mathbb{D})=H^{\infty}(\mathbb{D}).$ Then, $\phi$ is $H^2(\DD)$-inner if and only if $\phi$ is $\mathscr{H}^2(A_r)$-subinner. 
An analogous result holds for $H^2(\DD_0)$-inner functions.
\end{theorem}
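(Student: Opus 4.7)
The plan is to deduce Theorem~\ref{69} from Theorem~\ref{100} together with the essential uniqueness (up to unimodular constants) of the subinner-free outer factorization, as stated in the introduction. The one preliminary observation I would record first is that the constant function $1$ is $\mathscr{H}^2(A_r)$-free outer: indeed, $1\in H^2(\DD)\subset\mathscr{H}^2(A_r)$ has trivial $H^2(\DD)$ inner-outer factorization $1=1\cdot 1$, so Theorem~\ref{100} identifies this, up to unimodular scalars, with the subinner-free outer factorization in $\mathscr{H}^2(A_r)$, forcing both factors to be unimodular scalar multiples of the constant $1$.

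For the forward direction, suppose $\phi\in H^\infty(\DD)$ is $H^2(\DD)$-inner. Then $\phi\in H^2(\DD)\subset \mathscr{H}^2(A_r)$, and its classical inner-outer factorization is $\phi=\phi\cdot 1$. Applying Theorem~\ref{100} to $f=\phi$, this factorization agrees (up to unimodular constants) with the $\mathscr{H}^2(A_r)$ subinner-free outer factorization of $\phi$, so $\phi$ equals a unimodular scalar times an $\mathscr{H}^2(A_r)$-subinner function; since the subinner class is obviously preserved under multiplication by unimodular constants, $\phi$ itself is subinner.

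For the converse, assume $\phi\in H^\infty(\DD)$ is $\mathscr{H}^2(A_r)$-subinner, and let $\phi=\psi g$ denote its $H^2(\DD)$ inner-outer factorization. By Theorem~\ref{100}, $(\psi,g)$ is (up to unimodular constants) the subinner-free outer pair for $\phi$ in $\mathscr{H}^2(A_r)$. On the other hand, by hypothesis $\phi$ is subinner and, by the preliminary observation, $1$ is free outer, so $\phi=\phi\cdot 1$ is itself a valid subinner-free outer factorization. Uniqueness forces $g$ to be a unimodular constant, hence $\phi=c\psi$ with $|c|=1$ and $\phi$ is $H^2(\DD)$-inner. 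The analogue for $H^2(\DD_0)$ follows by the same argument, using the $H^2(\DD_0)$ half of Theorem~\ref{100}.

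All the real work is in Theorem~\ref{100}; the main obstacle here is just making sure the passage from ``factorizations coincide up to unimodular scalars'' to ``$\phi$ is subinner'' (and back) is clean, which amounts to the two trivial invariance checks above. No quantitative estimates or further structural results about $\mathscr{H}^2(A_r)$ are needed beyond what Theorem~\ref{100} already supplies.
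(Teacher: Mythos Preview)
Your forward direction is fine (indeed it is essentially Proposition~\ref{12}, repackaged via Theorem~\ref{100}). The converse, however, is circular. A subinner/free outer \emph{pair} $(\phi,h)$ is not merely ``$\phi$ subinner and $h$ free outer'': by Definition~\ref{4}(c) one also needs $\|\phi h\|=\|h\|$, and the uniqueness in Theorem~\ref{3} is stated only for such pairs. In your argument you assert that $(\phi,1)$ is a subinner/free outer pair, but the missing condition reads
\[
\|\phi\cdot 1\|_{\mathscr{H}^2(A_r)}=\|1\|_{\mathscr{H}^2(A_r)}=1,
\]
i.e.\ $\|\phi\|_{H^2(\DD)}=1$ (using Lemma~\ref{aaa}(i)). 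From the hypothesis that $\phi$ is $\mathscr{H}^2(A_r)$-subinner you only get $\|\phi\|_{H^\infty(\DD)}=1$ (via Lemma~\ref{aaa}(ii)); and for $\phi\in H^\infty(\DD)$ with $\|\phi\|_\infty=1$ one has $\|\phi\|_{H^2(\DD)}=1$ \emph{exactly} when $|\phi|=1$ a.e.\ on $\partial\DD$, i.e.\ when $\phi$ is inner. So the step ``$(\phi,1)$ is a valid subinner/free outer pair'' already presupposes the conclusion.

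The paper avoids this circularity by not working with the constant function $1$ at all. Instead it takes an arbitrary norm-attaining vector $f\in\mathscr{H}^2(A_r)$ for $M_\phi$ and applies the ``CP-factor'' factorization \cite[Theorem~1.10]{freeouterdraft} to write $f=\psi h$ with $h\in H^2(\DD)$ and $\psi$ a contractive multiplier $H^2(\DD)\to\mathscr{H}^2(A_r)$; a short chain of inequalities then forces $\|\phi h\|_{H^2(\DD)}=\|h\|_{H^2(\DD)}$, which in $H^2(\DD)$ genuinely implies $\phi$ is inner. If you want to salvage your approach, you would need an independent reason why the subinner hypothesis forces $\|\phi\|_{H^2(\DD)}=1$ --- but that is precisely the content of the theorem.
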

 Finally, Corollary \ref{16} (see Section \ref{50}) allows us to obtain new examples of free outer functions in $H_2^2$ by using the embedding of $\mathscr{H}^2(A_r)$ into $H_2^2$.

 \small
 \section{PRELIMINARIES}\label{123}
 \large

  Let $\mathcal{H}$ be a separable Hilbert function space on a non-empty set $X$ with reproducing kernel $k.$ Write $\text{Mult}(\mathcal{H}):=\{\phi:X\to\C: \phi f\in\mathcal{H} \text{ for all } f\in\mathcal{H}\}$ for the multiplier algebra of $\mathcal{H}$. Every multiplier $\phi$ defines a bounded linear operator $M_{\phi}\in\mathcal{B}(\mathcal{H})$ by $M_{\phi}(f)=\phi f.$ 
 Putting $||\phi||_{\text{Mult}(\mathcal{H})}=||M_{\phi}||_{\mathcal{B}(\mathcal{H})}$ turns $\text{Mult}(\mathcal{H})$ into a Banach algebra. If $f\in\mathcal{H}$, write $[f]$ for the multiplier invariant subspace generated by $f,$ i.e. the closure of $\text{Mult}(\mathcal{H})f$ in $\mathcal{H}$. $f$ is called \textit{cyclic}, if $[f]=\mathcal{H}.$ Also, define $P_f: \text{Mult}(\mathcal{H})\to\C$ to be the linear functional $P_f(\phi):=\langle \phi f, f\rangle$ and set 
 $$\mathcal{E}_f=\{g\in\mathcal{H}:P_f=P_g\}. $$\par

We say that $\mathcal{H}$ is a \textit{complete Pick space} if for every $r\in\mathbb{N}$ and every finite collection of points $z_1, z_2, \dots, z_n\in X$ and matrices $W_1, W_2,\dots, W_n\in M_r(\C)$, positivity of the $nr\times nr$-block matrix
 $$ \big[ k(z_i, z_j)(I_{\C^r}-W_iW_j^*)\big]^n_{i, j=1}.$$
 implies that there exists $\Phi\in M_r(\text{Mult}(\mathcal{H}))$ of norm at most $1$ such that
 $$\Phi(z_i)=W_i\hspace*{0.2 cm} (i=1, \dots, n). $$
 In this setting, $k$ is called a \textit{complete Pick kernel}. By a theorem of Agler and M\raise.5ex\hbox{c}Carthy \cite{CNPkernels}, a function 
 $k:X\times X\to\C$ is an (irreducible) complete Pick kernel if and only if it has 
 the form 
 \begin{equation} \label{1}  k_x(y)=k(y,x)=\frac{\delta(y)\overline{\delta(x)}}{1-\langle u(y),u(x)\rangle_{\C^d}},\end{equation}
 where $\delta: X \to \mathbb{C}-\{0\}$, $u$ is a function from $X$ into the open unit ball $\mathbb{B}_d$ of $\C^d$ and $d\in\mathbb{N}\cup\{\infty\}$  ($\mathbb{B}_{\infty}$ being the open unit ball of $l^2).$ We have already mentioned examples of well-known function spaces with CP kernels. In particular, the Drury-Arveson space $H^2_d$ with kernel $k_w(z)=\frac{1}{1-\langle z,w\rangle_{\C^d}}$ has a universal property among all CP spaces (see below for details). For further background on this topic, see \cite{Pick}. \par
We can now state the Aleman, Hartz, M\raise.5ex\hbox{c}Carthy, Richter factorization result. To be precise, their theorems are stated in the setting of a CP kernel $k$ \textit{normalized} at $z_0\in X$ (meaning that  in (\ref{1}) we have $\delta\equiv 1$ and $u(z_0)=0$). We summarize them below.
 \begin{definition} \label{4} (a) A function $f\in\mathcal{H}$ is called \textit{free outer}, if 
     $$|f(z_0)|=\sup\{|g(z_0)|: g\in\mathcal{E}_f\}.$$
     (b) A multiplier $\phi\in\text{Mult}(\mathcal{H})$ is called \textit{subinner}, if $||\phi||_{\text{Mult}(\mathcal{H})}=1$ and if there exists a nonzero $h\in\mathcal{H}$ such that with $||\phi h||=||h||.$ \\
     (c)  A pair ($\phi$, $f$) is called a \textit{subinner/free outer pair}, if $\phi$ is subinner, $f$ is free outer with $f(z_0)>0$, and $||\phi f||=||f||. $
     \end{definition}
 \begin{theorem} \label{3}
 For every $f\in\mathcal{H}\setminus\{0\}$ there is a unique subinner/free outer pair ($\phi$, $h$) such that $f=\phi h.$
 \end{theorem}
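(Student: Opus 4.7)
My plan is to establish existence via an extremal problem for the evaluation at $z_0$ restricted to $\mathcal{E}_f$, then manufacture the multiplier $\phi$ from the resulting free outer $h$ by a commutant-lifting argument that uses the complete Pick property, and finally deduce uniqueness by a rigidity argument at $z_0$.

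For existence, fix $f\ne 0$. Taking $\phi\equiv 1$ in the defining relation of $\mathcal{E}_f$ gives $\|g\|=\|f\|$ for every $g\in\mathcal{E}_f$, so the reproducing formula yields $|g(z_0)|\le\|f\|\,k(z_0,z_0)^{1/2}$ and in particular $M:=\sup\{|g(z_0)|:g\in\mathcal{E}_f\}<\infty$. A maximizing sequence $g_n\in\mathcal{E}_f$ is norm-bounded, and I would extract a weak limit $h$ and upgrade to norm convergence via a Radon--Riesz argument using that $\|g_n\|=\|f\|$ is constant and $g_n(z_0)\to M=h(z_0)$; norm-closedness of $\mathcal{E}_f$ places $h\in\mathcal{E}_f$, and after a unimodular normalization so that $h(z_0)>0$, the function $h$ is free outer by definition.

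Next, from $P_f=P_h$ and polarization over $\psi_1+\lambda\psi_2$ (with $\lambda\in\mathbb{C}$ varying), one gets $\langle\psi_1 f,\psi_2 f\rangle=\langle\psi_1 h,\psi_2 h\rangle$ for all $\psi_i\in\text{Mult}(\mathcal{H})$. Hence the rule $V(\psi h):=\psi f$ defines an isometric Mult-module map from $\text{Mult}(\mathcal{H})\cdot h$ onto $\text{Mult}(\mathcal{H})\cdot f$, extending to an isometric Mult-module isomorphism $V:[h]\to[f]$ with $Vh=f$. The complete Pick hypothesis enters decisively here: by a commutant-lifting / Leech-type theorem for CP spaces (Ball--Trent--Vinnikov), any contractive Mult-module map from an invariant subspace of $\mathcal{H}$ into $\mathcal{H}$ is multiplication by a contractive multiplier. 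Applying this to $V$ produces $\phi\in\text{Mult}(\mathcal{H})$ with $\|\phi\|_{\text{Mult}}\le 1$ and $V=M_\phi|_{[h]}$, so $f=\phi h$ with $\|\phi h\|=\|h\|$; since $h\ne 0$, this forces $\|\phi\|_{\text{Mult}}=1$, and $(\phi,h)$ is the desired pair.

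For uniqueness, suppose $f=\phi_1 h_1=\phi_2 h_2$ are two subinner/free outer factorizations. Each $h_i\in\mathcal{E}_f$ with $h_i(z_0)=M$. The construction above yields isometric module isomorphisms $V_i:[h_i]\to[f]$, and $V_2^{-1}V_1:[h_1]\to[h_2]$ is again such an isomorphism. Applying the CP lifting to it and its inverse realizes them as contractive multipliers $\alpha$ and $\beta$ with $\alpha\beta\equiv 1$ and $\alpha h_1=h_2$. Evaluating the last identity at $z_0$ forces $\alpha(z_0)=1$; since $k_{z_0}\equiv 1$ in the normalized setting, we have $|\alpha(z_0)|=|\langle\alpha,1\rangle|\le\|\alpha\|_{\mathcal{H}}\le\|\alpha\|_{\text{Mult}}\le 1$, and equality throughout together with the Cauchy--Schwarz equality case forces $\alpha\equiv 1$. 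Hence $h_1=h_2$, and $\phi_1 h_1=\phi_2 h_1$ combined with the density of $\{h_1\ne 0\}$ yields $\phi_1=\phi_2$. The principal obstacle throughout is the commutant-lifting invocation: it is precisely the complete Pick hypothesis that allows the isometric module map $V$ to be represented as multiplication, without which neither the existence of $\phi$ nor its uniqueness would follow; the secondary delicate point is the weak-to-norm upgrade in the existence step, since $\mathcal{E}_f$ is cut out by quadratic rather than linear conditions.
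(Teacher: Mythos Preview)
The paper does not prove this theorem; it is quoted from Aleman--Hartz--M\textsuperscript{c}Carthy--Richter \cite{freeouterdraft}. The route indicated in the introduction and in Theorem~\ref{15} passes through the isometric embedding $U:\mathcal{H}_k\to\mathcal{F}^2_d$ into the free Fock space, where a free inner--outer factorization is already available (Arias--Popescu, Davidson--Pitts). Existence of the free outer factor is thus imported from the noncommutative setting, and one verifies \emph{a posteriori} that it lies in the embedded copy of $\mathcal{H}$ and solves the extremal problem; no direct compactness argument in $\mathcal{H}$ is needed.

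Your construction of $\phi$ from a given $h\in\mathcal{E}_f$ via commutant lifting, and your uniqueness argument via Cauchy--Schwarz rigidity at $z_0$, are essentially sound (the final step $\phi_1 h_1=\phi_2 h_1\Rightarrow\phi_1=\phi_2$ is cleanest once one knows free outer $\Rightarrow$ cyclic $\Rightarrow$ nowhere vanishing, rather than by invoking ``density of $\{h_1\ne 0\}$'' on an abstract set $X$). The genuine gap is the existence step, which you flag as delicate but do not close. Radon--Riesz requires $\|g_n\|\to\|h\|$; from weak convergence you only get $\|h\|\le\liminf\|g_n\|=\|f\|$, and equality would follow from $h\in\mathcal{E}_f$, which is precisely what is in question. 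The set $\mathcal{E}_f$ is cut out by the \emph{quadratic} conditions $\langle\phi g,g\rangle=\langle\phi f,f\rangle$, and there is no reason for it to be weakly closed: writing $\langle\phi g_n,g_n\rangle-\langle\phi g,g\rangle=\langle g_n-g,M_\phi^*g_n\rangle+\langle\phi g,g_n-g\rangle$, the second term vanishes but the first need not. Without an extremizer actually lying in $\mathcal{E}_f$, neither the free-outerness of $h$ nor the module isometry $V:[h]\to[f]$ is available, and the argument stalls. The Fock-space route bypasses this compactness obstacle entirely.
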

 \begin{theorem}
 If $f, g \in \mathcal{H}\setminus\{0\}$, then $P_f=P_g$ if and only if $f$ and $g$ have the same free outer factors. 
 \end{theorem}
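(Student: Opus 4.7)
The plan is to derive both implications from a single key identity: $P_f = P_{h_f}$, where $h_f$ is the free outer factor of $f$. Once this identity is in hand, the forward implication is immediate (unimodular multiples induce the same functional $P$), and the reverse reduces to a rigidity statement about free outer functions.

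To establish $P_f = P_{h_f}$, I write $f = \phi_f h_f$ as in Theorem~\ref{3} and fix $\psi \in \text{Mult}(\mathcal{H})$. Since $M_\psi$ and $M_{\phi_f}$ commute,
\[
P_f(\psi) = \langle M_\psi M_{\phi_f} h_f, M_{\phi_f} h_f\rangle = \langle M_{\phi_f}^* M_{\phi_f} M_\psi h_f, h_f\rangle.
\]
Set $A := I - M_{\phi_f}^* M_{\phi_f}$, which is positive because $||\phi_f||_{\text{Mult}(\mathcal{H})} = 1$. The subinner/free outer identity $||\phi_f h_f|| = ||h_f||$ says exactly that $\langle A h_f, h_f\rangle = 0$, so $A^{1/2} h_f = 0$. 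Cauchy--Schwarz for the positive sesquilinear form $(u, v) \mapsto \langle A u, v\rangle$ then yields
\[
|P_f(\psi) - P_{h_f}(\psi)| = |\langle A M_\psi h_f, h_f\rangle| \le \langle A M_\psi h_f, M_\psi h_f\rangle^{1/2}\langle A h_f, h_f\rangle^{1/2} = 0.
\]
The forward direction follows at once: if $h_g = c h_f$ with $|c|=1$, then $P_{h_g} = P_{h_f}$, hence $P_g = P_{h_g} = P_{h_f} = P_f$.

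For the reverse direction, assume $P_f = P_g$. The identity just proved gives $P_{h_f} = P_{h_g}$, so $h_g \in \mathcal{E}_{h_f}$ and $h_f \in \mathcal{E}_{h_g}$. The free outer property of $h_f$ and $h_g$ forces $|h_f(z_0)| \ge |h_g(z_0)|$ and the reverse inequality; combined with the normalization $h_f(z_0), h_g(z_0) > 0$, I conclude $h_f(z_0) = h_g(z_0)$.

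The main obstacle is promoting this agreement at $z_0$ to the full identity $h_f = h_g$. The cleanest route is via the uniqueness half of Theorem~\ref{3}: if one can exhibit a subinner multiplier $\phi$ with $h_g = \phi h_f$, then $(\phi, h_f)$ and the trivial pair $(1, h_g)$ are two subinner/free outer factorizations of $h_g$, forcing $\phi$ to be a unimodular constant and hence $\phi \equiv 1$ after comparing values at $z_0$. Extracting such a $\phi$ from the mere membership $h_g \in \mathcal{E}_{h_f}$ is the delicate step; it appears to require a structural description of the form $\mathcal{E}_{h_f} = \{\phi h_f : \phi \in \text{Mult}(\mathcal{H}),\ ||\phi||_{\text{Mult}(\mathcal{H})} \le 1,\ ||\phi h_f|| = ||h_f||\}$ for a free outer $h_f$, which is the core content of the AHMR rigidity machinery and is where the bulk of the analytic work resides.
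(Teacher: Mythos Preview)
The paper does not actually prove this theorem; it is quoted without proof in Section~\ref{123} as a preliminary result from \cite{freeouterdraft}. So there is no ``paper's own proof'' to compare against, and your attempt must be judged on its own.

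Your derivation of $P_f = P_{h_f}$ via $A = I - M_{\phi_f}^*M_{\phi_f} \ge 0$, $A^{1/2}h_f = 0$, and Cauchy--Schwarz is correct and standard; the forward implication then follows immediately, as you note.

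The reverse implication, however, is left genuinely incomplete, and you say so yourself. Getting from $P_{h_f} = P_{h_g}$ and $h_f(z_0) = h_g(z_0)$ to $h_f = h_g$ is the whole content of the result, and your proposed route---show that for free outer $h$ one has $\mathcal{E}_h = \{\phi h : \phi \text{ subinner with } \|\phi h\| = \|h\|\}$, then invoke uniqueness in Theorem~\ref{3}---is exactly the line of argument in \cite{freeouterdraft}. But you do not carry it out; you only name it and call it ``the core content of the AHMR rigidity machinery.'' That is an accurate outline of how the proof goes in the source, not a proof. As written, the proposal establishes one direction cleanly and reduces the other to an unproved structural lemma about $\mathcal{E}_h$ that is of comparable depth to the theorem itself.
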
 \noindent
  Having a distinguished normalization point turns out not to be important.
 \begin{corollary}\label{2}
 Let $h\in\mathcal{H}$. The following are equivalent: \\
 (a) $h$ is free outer, \\
 (b) there is $z\in X$ with $|h(z)|=\sup\{|f(z)|: f\in\mathcal{E}_f\}$, \\
 (c) for all $z\in X$ we have $|h(z)|=\sup\{|f(z)|: f\in\mathcal{E}_f\}$.
 \end{corollary}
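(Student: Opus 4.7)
The plan is to prove the three conditions equivalent by establishing the cycle (a) $\Rightarrow$ (c) $\Rightarrow$ (b) $\Rightarrow$ (a); only the first and last implications carry real content, since (c) $\Rightarrow$ (b) is immediate. The two ingredients I will rely on are the preceding theorem (identifying $\mathcal{E}_h$ as the set of functions that share $h$'s free outer factor) and the elementary fact that any contractive multiplier $\phi$ of $\mathcal{H}$ satisfies the pointwise bound $|\phi(x)| \leq 1$ on $X$, which follows from the eigenvector identity $M_\phi^* k_x = \overline{\phi(x)}k_x$ together with $\|M_\phi^*\| \leq 1$.

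For (a) $\Rightarrow$ (c), assume $h$ is free outer at $z_0$; then $h$ itself is the free outer factor of $h$ (up to a unimodular constant), so the preceding theorem says that every $g \in \mathcal{E}_h$ has the same free outer factor as $h$. Theorem \ref{3} then produces a subinner $\phi_g \in \text{Mult}(\mathcal{H})$ with $g = e^{i\theta_g}\phi_g h$ for some $\theta_g \in \mathbb{R}$. Pointwise contractivity of $\phi_g$ yields $|g(x)| \leq |h(x)|$ for every $x \in X$, and since $h \in \mathcal{E}_h$ the supremum is attained by $h$ itself, giving (c).

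For (b) $\Rightarrow$ (a), let $z_1$ be the point furnished by (b) and factor $h = \phi h_0$ via Theorem \ref{3} at $z_0$, with $\phi$ subinner, $h_0$ free outer, and $h_0(z_0) > 0$. The preceding theorem places $h_0 \in \mathcal{E}_h$, and the already-proved (a) $\Rightarrow$ (c) applied to $h_0$ gives $|h_0(z)| = \sup\{|g(z)| : g \in \mathcal{E}_h\}$ at every $z \in X$. Combined with hypothesis (b) at $z_1$, this yields $|h(z_1)| = |h_0(z_1)|$. Because $h_0$ is free outer it is in particular cyclic, and cyclic vectors in a normalized complete Pick space are nowhere vanishing: if $\phi_n h_0 \to 1$ in $\mathcal{H}$, then by pointwise evaluation $\phi_n(z_1) h_0(z_1) \to 1$, forcing $h_0(z_1) \neq 0$. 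Therefore $|\phi(z_1)| = 1$.

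The step I expect to be the main technical point is propagating this pointwise extremality to the conclusion that $\phi$ is a unimodular constant. Positivity of the kernel $[k(x, y)(1 - \phi(x)\overline{\phi(y)})]_{x,y \in X}$ (equivalent to $\|\phi\|_{\text{Mult}(\mathcal{H})} \leq 1$) applied to any pair $\{z_1, x\}$ produces a $2 \times 2$ PSD matrix with vanishing diagonal entry at position $(z_1, z_1)$; hence its off-diagonal entry $k(z_1, x)(1 - \phi(z_1)\overline{\phi(x)})$ must vanish, and nonvanishing of the irreducible complete Pick kernel $k$ forces $\phi(x) = \phi(z_1)$ for every $x \in X$. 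Thus $\phi$ is a unimodular constant, $h$ is a unimodular multiple of the free outer function $h_0$, and (a) follows.
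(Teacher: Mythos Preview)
The paper does not supply its own proof of this corollary: it is stated, along with Definition~\ref{4}, Theorem~\ref{3}, and the unnumbered theorem, as a summary of results from \cite{freeouterdraft}, with only the remark ``Having a distinguished normalization point turns out not to be important'' by way of commentary. So there is no in-paper argument to compare against.

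That said, your proof is correct and self-contained, using exactly the tools the paper makes available (Theorem~\ref{3}, the preceding theorem on $P_f=P_g$, and basic properties of contractive multipliers on CP spaces). Two small points worth making explicit for a polished write-up: first, when you invoke (a)$\Rightarrow$(c) for $h_0$ you are implicitly using $\mathcal{E}_{h_0}=\mathcal{E}_h$, which follows from $P_{h_0}=P_h$; second, your nonvanishing-of-$k$ step relies on the kernel being irreducible (equivalently, of the normalized form $k_x(y)=1/(1-\langle u(y),u(x)\rangle)$, which never vanishes), an assumption the paper does make but only somewhat implicitly via equation~(\ref{1}). Neither of these is a gap, just places where one extra sentence would make the argument airtight.
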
 \par

We now briefly discuss preliminaries in regards to the free Fock space. Let $d\in\mathbb{N}\cup\{\infty\}$ and write $\mathbb{F}^+_d$ for the free semigroup on $d$ letters $\{1, 2,\dots\}$; that is, the set of all words $w=w_1w_2\cdots w_k$ over all (finite) lengths $k$, where each $w_j\in\{1, 2,\dots\}.$   We also include the empty word $\varnothing$ in $\mathbb{F}^+_d$, the length of which is defined to be zero.  If $w\in \mathbb{F}^+_d,$ then $\alpha(w)\in\mathbb{N}^d_0$ is the multi-index associated with $w$, defined by $\alpha(w)=(\alpha_1, \dots, \alpha_d)$, where $\alpha_j$ equals the number of times the letter $j$ occurs in $w.$ 
Also, let $x=(x_1,\dots,x_d)$ be a freely non-commuting indeterminate with $d$ components. If $w\in\mathbb{F}^+_d,$ then the free monomials are defined by $x^w=1,$ if $w=\varnothing,$ and $x^w=x_{w_1}\dots x_{w_k}$, if $w=w_1\cdots w_k.$ \par
The free Fock space $\mathcal{F}^2_d$ is the space of all power series in d non-commuting formal variables  with square-summable coefficients, i.e. $F\in \mathcal{F}^2_d$ if and only if $F(x)=\sum_{w\in\mathbb{F}^+_d}\hat{F}(w)x^w$ and $||F||^2=\sum_{w\in\mathbb{F}^+_d}|\hat{F}(w)|^2<\infty.$ A distinguished subspace of $\mathcal{F}^2_d$ is the symmetric Fock space $\mathcal{H}^2_d\subseteq \mathcal{F}^2_d$. An element $F\in\mathcal{F}^2_d$ is in $\mathcal{H}^2_d$ if and only if $\hat{F}(w)=\hat{F}(v),$ whenever $\alpha(w)=\alpha(v).$ Now, for every $z=(z_1, z_2, \dots, z_d)\in\mathbb{B}_d$ and $\mathbf{n}:=(n_1, n_2, \dots, n_d)\in\mathbb{N}^d_0,$  set $z^{\mathbf{n}}:=z_1^{n_1}\cdots z_d^{n_d}$. The map $T:H^2_d\to\mathcal{F}^2_d$ defined by \begin{equation}\label{30}h(z)=\sum_{\mathbf{n}\in\mathbb{N}^d_0}\frac{h_{\mathbf{n}}}{||z^{\mathbf{n}}||^2_{H^2_d}}z^{\mathbf{n}}\mapsto H(x):=\sum_{\mathbf{n}\in\mathbb{N}^d_0} h_{\mathbf{n}}\Bigg(\sum_{w|\text{ }\lambda(w)=\mathbf{n}}x^w\Bigg) \end{equation}
is an isometric embedding of $H^2_d$ into $\mathcal{F}^2_d$ ($T$ identifies $H^2_d$ with $\mathcal{H}^2_d$, see \cite{shalithandbook}, Section 4).
Also, if $k_z(\lambda)=\frac{1}{1-\langle\lambda, z\rangle}$ denotes the kernel of $H^2_d,$ we set $K_z:=Tk_z=\sum_{\mathbf{n}\in\mathbb{N}^d_0}\overline{z}^{\mathbf{n}}\sum_{w|\text{ }\lambda(w)=\mathbf{n}}x^w\in\mathcal{H}^2_d,$ for every $z\in\mathbb{B}_d.$ For further details about the free Fock space and related non-commutative function theory, see \cite{JurySmirnov}, \cite{Pop1}, \cite{Pop2}, \cite{Pop3}, \cite{Shalit} and \cite{Salomon}.\par
 Now, consider an arbitrary normalized CP kernel $k_x(y)=\frac{1}{1-\langle u(y),u(x)\rangle}$, where $u:X\to\mathbb{B}_d$ and $d\in \mathbb{N}\cup\{\infty\}$. We can identify (see \cite{CNPkernels}) the Hilbert function space $\mathcal{H}_k$ associated with $k$ with the subspace 
$$\mathcal{H}=\text{ closed linear span of } \{K_z:z\in \text{ran }  u\}\subset \mathcal{H}^2_d\subset \mathcal{F}^2_d.$$
Furthermore, it can be shown that $\mathcal{H}$ is invariant under adjoints of multipliers of $\mathcal{F}^2_d$ and also the map $Uk_x=K_{u(x)}$ extends to be a linear isometry $U:\mathcal{H}_k\to\mathcal{F}^2_d$ with range equal to $\mathcal{H}.$ Thus, $UU^*=P_{\mathcal{H}}$ and we have $U^*=C_u,$ where $C_u F(x)=F(u(x))$ for all $x\in X.$ \par
Denote by $\mathcal{LF}^{\infty}_d$ the algebra of all elements $G\in\mathcal{F}^2_d$ such that the operator $F\mapsto GF$ (multiplication by $G$ from the left) is bounded on $\mathcal{F}^2_d.$ The algebra $\mathcal{RF}^{\infty}_d$ of bounded right multiplication operators is defined analogously. An element $F\in\mathcal{F}^{2}_d$ will be called \textit{left-outer} if $\{GF: G\in\mathcal{LF}^{\infty}_d \}$ is dense in $\mathcal{F}^2_d.$ Similarly, $F$ will be called \textit{right-outer} if $\{FG: G\in\mathcal{RF}^{\infty}_d \}$ is dense in $\mathcal{F}^2_d.$ The following is the content of Theorem 5.3 and Lemma 6.1 in \cite{freeouterdraft}.
\begin{theorem} \label{15}
Suppose $\mathcal{H}_k$ is a CP space and let $U: \mathcal{H}_k\to\mathcal{F}^2_d$ be its natural embedding into $\mathcal{F}^2_d$. If $f\in\mathcal{H}_k,$ the following are equivalent:
\begin{itemize}
    \item[a)] $f$ is free outer in $\mathcal{H}_k$;
    \item[b)] $Uf$ is left-outer in $\mathcal{F}^2_d$;
     \item[c)] $Uf$ is right-outer in $\mathcal{F}^2_d$;
     \item[d)] $S^{-1}(Uf)$ is free outer in $H^2_d,$
     where $S:=P_{\mathcal{H}^2_d}T$ is the isometric identification (\ref{30}) of $H^2_d$ with $\mathcal{H}^2_d.$
\end{itemize}
\end{theorem}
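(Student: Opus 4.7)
The plan is to build the equivalences on top of the Arias-Popescu/Davidson-Pitts free inner-outer factorization in $\mathcal{F}^2_d$, exploiting the co-invariance of $\mathcal{H} := U(\mathcal{H}_k)$ under the adjoints of left (and, by symmetry, right) multipliers of $\mathcal{F}^2_d$. The heart of the matter is (a) $\Leftrightarrow$ (b), which I would attack by contrapositive. Suppose $Uf$ is not left-outer and factor $Uf = \Theta G$ with $\Theta \in \mathcal{LF}^\infty_d$ inner and nonscalar and $G$ left-outer, so that $G = \Theta^* Uf \in \mathcal{H}$ by co-invariance; write $G = U\tilde g$. I must then verify $\tilde g \in \mathcal{E}_f$: for any $\phi \in \text{Mult}(\mathcal{H}_k)$, the push-forward $UM_\phi U^*$ is the compression to $\mathcal{H}$ of a left multiplier $M^L_\Phi$ on $\mathcal{F}^2_d$, and the isometric intertwining furnished by $\Theta$ forces $\langle M^L_\Phi\Theta G,\Theta G\rangle = \langle M^L_\Phi G, G\rangle$, i.e.\ $P_f = P_{\tilde g}$. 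Reading off the constant Fock coefficient at the normalization point (where $u(z_0)=0$, so $Uk_{z_0}=K_0$ is the vacuum) yields $f(z_0) = \hat\Theta(\varnothing)\hat G(\varnothing)$ and $\tilde g(z_0) = \hat G(\varnothing)$; since $\Theta$ is a nonscalar inner partial isometry one has $|\hat\Theta(\varnothing)|<1$, so $|\tilde g(z_0)|>|f(z_0)|$ whenever $\hat G(\varnothing)\neq 0$, contradicting freedom of $f$. The degenerate case $\hat G(\varnothing)=0$ is dispatched by invoking Corollary~\ref{2} to move the normalization point. The reverse direction is symmetric: an $\mathcal{E}_f$-competitor that beats $f$ at $z_0$ yields, via the isometric intertwining of the cyclic submodules generated by $Uf$ and $U\tilde g$, a nontrivial inner left-factor on $Uf$.

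For (b) $\Leftrightarrow$ (c), I would use the word-reversal symmetry $J: x^w \mapsto x^{w^{\mathrm{rev}}}$, a unitary on $\mathcal{F}^2_d$ that fixes the symmetric subspace $\mathcal{H}^2_d$ pointwise (because symmetric coefficients depend only on the abelianization $\alpha(w)$). Since $\mathcal{H}\subset \mathcal{H}^2_d$, we get $JUf = Uf$, and $J$ interchanges $\mathcal{LF}^\infty_d$ with $\mathcal{RF}^\infty_d$; thus density of $\mathcal{LF}^\infty_d Uf$ is equivalent to that of $\mathcal{RF}^\infty_d Uf$. For (a) $\Leftrightarrow$ (d), apply the already-established (a) $\Leftrightarrow$ (b) to the CP space $H^2_d$, whose embedding into $\mathcal{F}^2_d$ is precisely $S$: then $S^{-1}(Uf)$ is free outer in $H^2_d$ iff $Uf = S(S^{-1}Uf)$ is left-outer in $\mathcal{F}^2_d$ iff $f$ is free outer in $\mathcal{H}_k$.

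I expect the main obstacle to be the verification that the $\tilde g$ produced by the Fock-space factorization actually lies in $\mathcal{E}_f$ --- i.e., matching the commutative $\text{Mult}(\mathcal{H}_k)$-functional $P_f$ with the ambient noncommutative structure on $\mathcal{F}^2_d$. This requires carefully tracking how the compression $UM_\phi U^*$ lifts to a left multiplier on $\mathcal{F}^2_d$ and how that lift interacts with the inner factor $\Theta$. The strict bound $|\hat\Theta(\varnothing)|<1$ for nonscalar inner $\Theta$, and the degenerate vanishing $\hat G(\varnothing)=0$, also need careful treatment, most naturally by varying the base point via Corollary~\ref{2}.
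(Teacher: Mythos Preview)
The paper does not actually prove this theorem: it is quoted verbatim as ``the content of Theorem~5.3 and Lemma~6.1 in \cite{freeouterdraft}.'' So there is no in-paper proof to compare against; I can only assess your sketch on its own merits.

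Your overall architecture is the right one, and the (b)\,$\Leftrightarrow$\,(c) step via the word-reversal unitary $J$ (which fixes $\mathcal{H}\subset\mathcal{H}^2_d$ pointwise) and the (a)\,$\Leftrightarrow$\,(d) reduction are fine. The problem is a left/right bookkeeping error in the heart of (a)\,$\Leftrightarrow$\,(b). You factor $Uf=\Theta G$ with $\Theta$ a \emph{left}-inner and then assert that, for a \emph{left} lift $\Phi$ of $\phi\in\mathrm{Mult}(\mathcal{H}_k)$, the ``isometric intertwining'' gives
\[
\langle M_\Phi^L \Theta G,\Theta G\rangle=\langle M_\Phi^L G,G\rangle.
\]
That identity would require $M_\Theta^{L*}M_\Phi^L M_\Theta^L=M_\Phi^L$, i.e.\ that $M_\Theta^L$ and $M_\Phi^L$ commute, which is false in the free setting. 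What does hold is that left and right multipliers commute: $M_\Phi^R M_\Theta^L=M_\Theta^L M_\Phi^R$. Since $R_i^*K_z=\bar z_iK_z$ just as $L_i^*K_z=\bar z_iK_z$, the subspace $\mathcal{H}$ is equally co-invariant for $\mathcal{RF}^\infty_d$, and $\mathrm{Mult}(\mathcal{H}_k)$ lifts to $\mathcal{RF}^\infty_d$ just as well as to $\mathcal{LF}^\infty_d$. If you lift $\phi$ to a \emph{right} multiplier $\Psi$, then indeed
\[
\langle M_\Psi^R\Theta G,\Theta G\rangle=\langle M_\Theta^L M_\Psi^R G,M_\Theta^L G\rangle=\langle M_\Psi^R G,G\rangle,
\]
and your argument goes through. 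So the gap is fixable, but as written the key step is wrong.

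Your converse sketch (``an $\mathcal{E}_f$-competitor \dots\ yields a nontrivial inner left-factor on $Uf$'') is also too thin. To pass from $P_f=P_{\tilde g}$ in $\mathcal{H}_k$ to ``$Uf$ and $U\tilde g$ have the same free outer factor'' you need the \emph{reverse} transfer: that for every $\Psi\in\mathcal{RF}^\infty_d$ the compression $P_{\mathcal{H}}M_\Psi^R|_{\mathcal{H}}$ equals $UM_\phi U^*$ for some $\phi\in\mathrm{Mult}(\mathcal{H}_k)$, so that $P_f=P_{\tilde g}$ forces $\langle M_\Psi^R Uf,Uf\rangle=\langle M_\Psi^R U\tilde g,U\tilde g\rangle$ for \emph{all} $\Psi$, not just those arising as lifts. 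That surjectivity of the compression map is the substance of the complete Pick hypothesis here and should be stated explicitly.
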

Finally, we mention a few basic facts about the space $\mathscr{H}^2(A_r)$ that will be needed. Let $A_r=\{r<|z|<1\}$ denote an annulus. Recall that $\mathscr{H}^2(A_r)$ is defined as the Hilbert function space on $A_r$ induced by the kernel $$k_{r}(\lambda,\mu):=\frac{1-r^2}{(1-\lambda\bar{\mu})(1-r^2/\lambda\bar{\mu})}.$$
Letting $||^.||_{\mathscr{H}^2(A_r)}$ denote the corresponding norm, we obtain that $$||f||_{\mathscr{H}^2(A_r)}^2=\sum_{-\infty}^{-1}r^{2n}|c_n|^2+\sum_{0}^{\infty}|c_n|^2,$$
for every $f=\sum c_nz^n\in\mathscr{H}^2(A_r).$
Also, every $\phi\in \textnormal{Mult}(\mathscr{H}^2(A_r))$ satisfies (see \cite{tsikalas2021von})
 $$||\phi||_{\infty}\le ||\phi||_{\textnormal{Mult}(\mathscr{H}^2(A_r))}\le \sqrt{2}||\phi||_{\infty}, $$
 hence $\textnormal{Mult}(\mathscr{H}^2(A_r))$ is equal (but not isometric) to $H^{\infty}(A_r).$
\\
Now, putting 
$$u:A_r\to\mathbb{B}_2$$
$$z\mapsto \bigg(\frac{z}{\sqrt{r^2+1}},\frac{r}{\sqrt{r^2+1}}\frac{1}{z}\bigg),$$
and letting $k_2$ denote the kernel of $H^2_2,$ it can be easily checked that 
\begin{equation} \label{20} k_{r}(\lambda, \mu)=\bigg(\frac{1-r^2}{1+r^2}\bigg) k_2(u(\lambda), u(\mu)), \hspace{0.2 cm} \forall \lambda, \mu\in A_r.\end{equation}
So, $\mathscr{H}^2(A_r)$ is a CP space and it should be mentioned that, while $k_r$ is not a normalized CP kernel, normalizing it only requires dividing by the constant $(1-r^2)/(1+r^2)$. Also, Definition \ref{4} tells us that replacing the norm of the base space $\mathcal{H}_k$ by any constant multiple of it does not affect the properties of being subinner or free outer. Hence, the fact that $k_r$ is not normalized is not important for our results. 
\small
\section{MAIN RESULTS}\label{50}
\large
First, we record a helpful lemma; for multipliers of $\mathscr{H}^2(A_r)$ that are analytic either on $\DD$ or on $\DD_0=\{r<|z|\}$, the multiplier norm actually coincides with the supremum norm. 
\begin{lemma} \label{aaa}
\begin{itemize}
    \item[i)] If $f\in H^2(\DD),$
 then $$||f||_{H^2(\DD)}=||f||_{\mathscr{H}^2(A_r)}.$$
 Similarly, if $f\in H^2(\DD_0),$
 then $$||f||_{H^2(\DD_0)}=||f||_{\mathscr{H}^2(A_r)}.$$
 
 \item[ii)] If either $\phi\in H^{\infty}(\DD)$ or $\phi\in H^{\infty}(\DD_0),$ then
 $$||\phi||_{\text{Mult}(\mathscr{H}^2(A_r))}=||\phi||_{H^{\infty}(A_r)}. $$

 \end{itemize}
\end{lemma}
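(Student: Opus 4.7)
Part (i) reduces to a direct computation with Laurent coefficients. For $f = \sum_{n \geq 0} c_n z^n \in H^2(\DD)$, the negative-index terms of the $\mathscr{H}^2(A_r)$ norm vanish, leaving $\|f\|^2_{\mathscr{H}^2(A_r)} = \sum_{n \geq 0}|c_n|^2 = \|f\|^2_{H^2(\DD)}$. The $H^2(\DD_0)$ case is the analogous statement for functions supported on non-positive Laurent indices, after using the standard identification of $H^2(\DD_0)$ with $H^2(\DD)$ via the conformal map $w = r/z$.

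Part (ii). Since the lower inequality $\|\phi\|_{H^\infty(A_r)} \le \|\phi\|_{\textnormal{Mult}(\mathscr{H}^2(A_r))}$ is already recorded in the preliminaries, only the reverse bound needs proof. My plan is to realize $M_\phi$ as a Hilbert-space functional calculus of a contraction and invoke von Neumann's inequality. A direct computation in the orthonormal basis $\{e_n\}_{n \in \Z}$ with $e_n(z) = z^n$ for $n \geq 0$ and $e_n(z) = r^{-n}z^n$ for $n < 0$ identifies $M_z$ as the weighted bilateral shift $M_z e_n = w_n e_{n+1}$ with $w_n = 1$ for $n \geq 0$ and $w_n = r$ for $n \leq -1$; this gives $\|M_z\|_{\mathcal{B}(\mathscr{H}^2(A_r))} = 1$ and, symmetrically, $\|r M_z^{-1}\| = 1$.

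For $\phi \in H^\infty(\DD)$, set $\phi_\rho(z) := \phi(\rho z)$ for $\rho \in (0,1)$. Each $\phi_\rho$ extends holomorphically to a neighborhood of $\overline{\DD}$, its Taylor polynomials converge uniformly on $\overline{\DD}$, and applying the classical von Neumann inequality $\|p(M_z)\| \le \sup_{|z|\le 1}|p(z)|$ to these polynomials yields $\|M_{\phi_\rho}\| = \|\phi_\rho(M_z)\| \le \|\phi_\rho\|_{H^\infty(\overline{\DD})} \le \|\phi\|_{H^\infty(\DD)}$. It remains to let $\rho \to 1^-$. Using the identity $\|g\|^2_{\mathscr{H}^2(A_r)} = \|P_+ g|_{\mathbb{T}}\|^2_{L^2(\mathbb{T})} + \|P_- g|_{r\mathbb{T}}\|^2_{L^2(r\mathbb{T})}$ for the Laurent splitting $g = P_+ g + P_- g$, together with uniform convergence $\phi_\rho \to \phi$ on the compact circle $r\mathbb{T}$ and pointwise a.e. convergence on $\mathbb{T}$ (with a uniform $L^\infty$ bound), dominated convergence gives $\|(\phi_\rho - \phi)f\|_{\mathscr{H}^2(A_r)} \to 0$ for every Laurent polynomial $f$. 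The general bound $\|M_\psi\| \le \sqrt{2}\,\|\psi\|_\infty$ supplies the uniform operator-norm control needed to extend this to all of $\mathscr{H}^2(A_r)$, so $M_{\phi_\rho} \to M_\phi$ in the strong operator topology; lower semicontinuity then yields $\|M_\phi\| \le \|\phi\|_{H^\infty(\DD)} = \|\phi\|_{H^\infty(A_r)}$, the last equality being the maximum modulus principle.

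The case $\phi \in H^\infty(\DD_0)$ is handled identically after writing $\phi(z) = \tilde\phi(r/z)$ with $\tilde\phi \in H^\infty(\DD)$ and applying the same von Neumann argument to the contraction $rM_z^{-1}$. The \emph{only technical point} is the strong convergence $M_{\phi_\rho} \to M_\phi$; it is elementary but requires care because $\phi_\rho$ does not converge uniformly to $\phi$ on the outer boundary circle $\mathbb{T}$, forcing one to exploit the Laurent-projection form of the $\mathscr{H}^2(A_r)$-norm and treat the two boundary circles with different modes of convergence.
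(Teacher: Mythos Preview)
Your proof is correct but follows a genuinely different route from the paper. The paper's argument for part (ii) is a two-line Schur-product computation: since $\phi$ is a contractive multiplier of $H^2(\DD)$, the kernel $(1-\phi(\lambda)\overline{\phi(\mu)})/(1-\lambda\bar\mu)$ is positive semidefinite on $A_r\times A_r$; multiplying it (Schur) by the positive kernel $1/(1-r^2/\lambda\bar\mu)$ yields $(1-\phi(\lambda)\overline{\phi(\mu)})k_r(\lambda,\mu)\ge 0$, which is exactly the condition $\|\phi\|_{\text{Mult}(\mathscr{H}^2(A_r))}\le 1$. This exploits directly the factorization $k_r = \text{const}\cdot k_{H^2(\DD)}\cdot k_{H^2(\DD_0)}$ and needs no approximation.

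Your approach instead identifies $M_z$ as a Hilbert-space contraction and invokes von Neumann's inequality, then passes to general $\phi\in H^\infty(\DD)$ via the dilations $\phi_\rho$. This is sound, and the SOT limit $M_{\phi_\rho}\to M_\phi$ does go through as you indicate; the only point worth tightening is that the Laurent projections $P_\pm$ do not literally commute with restriction to the boundary circles, but for a Laurent polynomial $f$ the function $P_-\big((\phi_\rho-\phi)f\big)$ is itself a Laurent polynomial with coefficients tending to $0$, so the $P_-$ term is trivial and the $P_+$ term follows from $P_+g=g-P_-g$ and dominated convergence on $\mathbb T$. Your argument is more operator-theoretic and would adapt to any space on which $M_z$ is a contraction; the paper's kernel argument is shorter and more transparent here because it uses the specific product structure of $k_r$.
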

\begin{proof}
Part i) is obvious so we only show part ii). \\ WLOG, suppose $||\phi||_{H^\infty(A_r)}\le 1.$ If  $\phi\in \text{Hol}(\DD)$, then $||\phi||_{\infty}\le 1$ as a function on $\DD$ (by the maximum modulus principle) and so $\phi$ is a contractive multiplier of $H^2(\DD).$ 
By \cite[Theorem 5.21]{PauRa}, this implies that 
$$(1-\phi(\lambda)\overline{\phi(\mu)})\frac{1}{1-\lambda\bar{\mu}}\ge 0 \text{ \hspace{0.1 cm}on \hspace{0.1 cm}} \DD\times \DD $$
$$\Longrightarrow (1-\phi(\lambda)\overline{\phi(\mu)})\frac{1}{1-\lambda\bar{\mu}}\ge 0 \text{ \hspace{0.1 cm}on \hspace{0.1 cm}} A_r\times A_r $$
$$\Longrightarrow (1-\phi(\lambda)\overline{\phi(\mu)})\frac{1}{\big(1-\frac{r^2}{\lambda\bar{\mu}}\big)\big(1-\lambda\bar{\mu}\big)}\ge 0 \text{ \hspace{0.1 cm}on \hspace{0.1 cm}} A_r\times A_r,$$ 
as the Schur product of two positive semi-definite kernels is positive semi-definite (notice that $1/(1-r^2/(\lambda\bar{\mu}))$ is the kernel of the Hardy space on $\DD_0$). Hence, we conclude that 
$$(1-\phi(\lambda)\overline{\phi(\mu)})k_{r}(\lambda,\mu)\ge 0 $$ 
$$\Longrightarrow ||\phi||_{\text{Mult}(\mathscr{H}^2(A_r))}\le 1.$$
We have shown that $||\phi||_{\text{Mult}(\mathscr{H}^2(A_r))}\le||\phi||_{H^{\infty}(A)}, $ which concludes our argument. For the case where $\phi\in \text{Hol}(\DD_0),$
the proof proceeds in an analogous manner.
\end{proof}
We proceed to show that the classical inner functions of the Hardy space on $\DD$ become subinner functions when viewed as multipliers of $\mathscr{H}^2(A_r)$.
\begin{proposition} \label{12}
Suppose $g\in H^{\infty}(\DD)$ is an $H^2(\DD)$-inner function. Then, $g$ is a subinner multiplier of $\mathscr{H}^2(A_r)$. An analogous result holds if $g\in H^{\infty}(\DD_0)$ and $g$ is $H^2(\DD_0)$-inner\footnote{i.e. there exists an $H^2(\DD)$-inner function $h$ such that $g(z)=h(r/z).$}.
\end{proposition}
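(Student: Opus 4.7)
The strategy is to exploit Lemma \ref{aaa}, which says that the $\mathscr{H}^2(A_r)$ norm and multiplier norm agree with the classical Hardy-space norms for functions analytic on $\DD$ (respectively, on $\DD_0$). This should let us reduce subinnerness in $\mathscr{H}^2(A_r)$ directly to the classical isometric-multiplication property of an $H^2(\DD)$-inner function.

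The first step is to verify $\|g\|_{\text{Mult}(\mathscr{H}^2(A_r))} = 1$. Since $g$ is $H^2(\DD)$-inner, its boundary values have modulus one almost everywhere on $\partial\DD$, so $\|g\|_{H^\infty(\DD)} = 1$, and by the maximum modulus principle $\|g\|_{H^\infty(A_r)} = 1$ as well. Lemma \ref{aaa}(ii) then gives $\|g\|_{\text{Mult}(\mathscr{H}^2(A_r))} = 1$. This takes care of the norm-one condition in Definition \ref{4}(b).

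The second step is to produce a nonzero $h \in \mathscr{H}^2(A_r)$ with $\|gh\|_{\mathscr{H}^2(A_r)} = \|h\|_{\mathscr{H}^2(A_r)}$. The natural move is to choose $h$ inside the embedded subspace $H^2(\DD) \subset \mathscr{H}^2(A_r)$ — the simplest choice being $h \equiv 1$. Since $g \in H^\infty(\DD)$ multiplies $H^2(\DD)$ into itself, $gh$ also lies in $H^2(\DD)$, so Lemma \ref{aaa}(i) applies to both $h$ and $gh$ and reduces the desired equality to $\|gh\|_{H^2(\DD)} = \|h\|_{H^2(\DD)}$. This is exactly the isometric property characterizing $H^2(\DD)$-inner functions, so it is immediate.

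For the $H^2(\DD_0)$ analogue, the kernel $k_r$ is symmetric under the involution $z \mapsto r/z$ that swaps $\DD_0 \cap A_r$ with $\DD \cap A_r$, and Lemma \ref{aaa} is stated symmetrically for both subdomains, so the same two steps carry through verbatim with $H^2(\DD_0)$ in place of $H^2(\DD)$. There is no real obstacle; the only point worth care is ensuring that the witness $h$ lives in a subspace where the $\mathscr{H}^2(A_r)$ and $H^2(\DD)$ norms actually coincide, which is precisely what Lemma \ref{aaa}(i) provides.
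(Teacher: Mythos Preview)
Your proof is correct and follows essentially the same route as the paper: both arguments use Lemma~\ref{aaa}(ii) to get $\|g\|_{\text{Mult}(\mathscr{H}^2(A_r))}=1$, then pick a witness $h\in H^2(\DD)$ and apply Lemma~\ref{aaa}(i) to transfer the isometric-multiplication property of an inner function from $H^2(\DD)$ to $\mathscr{H}^2(A_r)$. The only cosmetic difference is that the paper observes the equality $\|gh\|_{\mathscr{H}^2(A_r)}=\|h\|_{\mathscr{H}^2(A_r)}$ for \emph{every} $h\in H^2(\DD)$, whereas you single out $h\equiv 1$; since one nonzero witness suffices for subinnerness, this changes nothing.
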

\begin{proof}
We only consider the case where $g$ is $H^2(\DD)$-inner. By Lemma \ref{aaa}, we have
$$||g||_{\text{Mult}(\mathscr{H}^2(A_r))}=||g||_{H^{\infty}(A_r)}=||g||_{H^{\infty}(\DD)}=1.$$
Now, let $h\in H^2(\DD)$ be arbitrary. Then, 
$$||gh||_{\mathscr{H}^2(A_r)}=||gh||_{H^2(\DD)}=||h||_{H^2(\DD)}=||h||_{\mathscr{H}^2(A_r)}. $$
Thus, $g$ is a norm-attaining multiplier of $\mathscr{H}^2(A_r)$ with multiplier norm equal to $1,$ i.e. a subinner multiplier.
\end{proof}

Our next step will be to show that for functions in $H^2(\DD)$ (similarly, for functions in $H^2(\DD_0)$), the property of $\mathscr{H}^2(A_r)$-free outerness coincides with outerness in the classical Hardy space sense. To do this, we will be needing the following two lemmata. \\ Let $\text{Mult}(H^2(\DD),\mathscr{H}^2(A_r))$ denote the set of functions $\phi: A_r\to\C$ that multiply $H^2(\DD)$ (or, more precisely, the restrictions of $H^2(\DD)$ functions to $A_r$) boundedly into $\mathscr{H}^2(A_r).$ These multipliers turn out to be equal precisely to all functions $\phi=\sum c_n z^n\in\text{Hol}(A_r)$ such that $\sum_{n>0} c_n z^n\in H^{\infty}(\DD)$ and $\sum_{n<0} c_n z^n\in H^2(\DD_0)$.

\begin{lemma} \label{8}
$$\text{Mult}(H^2(\DD),\mathscr{H}^2(A_r))=\{f+g: f\in H^2(\DD_0), g\in H^{\infty}(\DD)\}.$$
Also, 
$$||f+g||_{\text{Mult}(H^2(\DD),\mathscr{H}^2(A_r))}\le \frac{||f||_{H^2(\DD_0)}}{(1-r^2)}+||g||_{H^{\infty}(\DD)}.$$
An analogous result holds for $\text{Mult}(H^2(\DD_0),\mathscr{H}^2(A_r)).$
\end{lemma}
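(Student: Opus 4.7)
My plan is to prove the two set inclusions together with the norm bound, with the substantive work concentrated in the sufficiency (``$\supseteq$'') direction. For sufficiency, suppose $\phi=f+g$ with $f\in H^2(\DD_0)$ and $g\in H^{\infty}(\DD)$, and fix $h\in H^2(\DD)$; by the triangle inequality it suffices to estimate $\|M_f\|$ and $\|M_g\|$ separately. The $g$-part is immediate from Lemma \ref{aaa}(i): $gh\in H^2(\DD)$, and $\|gh\|_{\mathscr{H}^2(A_r)}=\|gh\|_{H^2(\DD)}\le \|g\|_{H^{\infty}(\DD)}\|h\|_{H^2(\DD)}$. The real work lies in bounding $\|fh\|_{\mathscr{H}^2(A_r)}$ in terms of $\|f\|_{H^2(\DD_0)}$ and $\|h\|_{H^2(\DD)}$.

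To estimate $\|fh\|_{\mathscr{H}^2(A_r)}$, I exploit the orthogonal decomposition $\mathscr{H}^2(A_r)=H^2(\DD)\oplus H^2_0(\DD_0)$ (with the second summand consisting of $H^2(\DD_0)$-functions vanishing at $\infty$), which is immediate from the coefficient formula for $\|\cdot\|_{\mathscr{H}^2(A_r)}^2$. This yields
$$\|fh\|^2_{\mathscr{H}^2(A_r)} = \|(fh)_+\|^2_{H^2(\DD)}+\|(fh)_-\|^2_{H^2(\DD_0)}.$$
The positive part $(fh)_+$ I dominate by $\|fh\|_{L^2(|z|=1)}$ via Parseval on the unit circle; on $|z|=1$ the reproducing kernel of $H^2(\DD_0)$ gives the pointwise bound $|f|\le \|f\|_{H^2(\DD_0)}/\sqrt{1-r^2}$, which factors out of the integral and leaves $\|h\|_{H^2(\DD)}$ behind. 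Symmetrically, $(fh)_-$ I dominate by $\|fh\|_{L^2(|z|=r)}$; on $|z|=r$ the reproducing kernel of $H^2(\DD)$ gives $|h|\le \|h\|_{H^2(\DD)}/\sqrt{1-r^2}$, factoring out and leaving $\|f\|_{H^2(\DD_0)}$ behind. Adding the two bounds produces an estimate $\|fh\|_{\mathscr{H}^2(A_r)}\le C(r)\|f\|_{H^2(\DD_0)}\|h\|_{H^2(\DD)}$ with $C(r)$ of order $1/(1-r^2)$.

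For necessity (``$\subseteq$''), I take $\phi\in\text{Mult}(H^2(\DD),\mathscr{H}^2(A_r))$ and apply $M_{\phi}$ to the constant function $1$; this forces $\phi\in\mathscr{H}^2(A_r)$, so the coefficient formula immediately gives that the negative Laurent part $\phi_-:=\sum_{n<0}c_nz^n$ lies in $H^2_0(\DD_0)\subset H^2(\DD_0)$. By the sufficiency direction applied to $\phi_-$, the operator $M_{\phi_-}\colon H^2(\DD)\to\mathscr{H}^2(A_r)$ is bounded, and hence so is $M_{\phi_+}=M_{\phi}-M_{\phi_-}$, where $\phi_+:=\sum_{n\ge 0}c_nz^n\in\text{Hol}(\DD)$. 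Since $\phi_+h$ is holomorphic on all of $\DD$ for any $h\in H^2(\DD)$, its negative Laurent part vanishes, and Lemma \ref{aaa}(i) then forces $\phi_+h\in H^2(\DD)$ with matching norms. Thus $M_{\phi_+}$ is a bounded multiplier of $H^2(\DD)$ into itself, and so $\phi_+\in H^{\infty}(\DD)$.

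The main obstacle is pinning down the constant exactly as stated: the two-circle estimate above directly yields only a constant of order $1/\sqrt{1-r^2}$, which matches the stated $1/(1-r^2)$ when $r$ is close to $1$ but is weaker when $r$ is small. Sharpening to the exact stated bound likely requires more delicate bookkeeping in the $f$-estimate, but this refinement is not needed for the qualitative content of the lemma (the set equality and boundedness of the multiplication operator).
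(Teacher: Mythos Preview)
Your argument is correct. The necessity direction and the $g\in H^{\infty}(\DD)$ part of sufficiency are essentially identical to the paper's. For the key step---showing that each $f\in H^2(\DD_0)$ multiplies $H^2(\DD)$ boundedly into $\mathscr{H}^2(A_r)$---the paper takes a different, purely kernel-theoretic route: starting from the general RKHS inequality $f(\lambda)\overline{f(\mu)}\preceq \|f\|^2_{H^2(\DD_0)}\,k_{H^2(\DD_0)}(\lambda,\mu)$ on $\DD_0\times\DD_0$, one restricts to $A_r\times A_r$, Schur-multiplies both sides by $k_{H^2(\DD)}$, and uses the factorization $k_{H^2(\DD_0)}\cdot k_{H^2(\DD)}=\tfrac{1}{1-r^2}\,k_r$ to read off the multiplier positivity condition directly. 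This yields $\|M_f\|\le \|f\|_{H^2(\DD_0)}/\sqrt{1-r^2}$, which implies the stated bound $1/(1-r^2)$ for every $r\in(0,1)$. Your two-circle estimate is more hands-on and produces the constant $\sqrt{2}/\sqrt{1-r^2}$; note that, contrary to your closing remark, this is actually \emph{stronger} than $1/(1-r^2)$ for $r$ close to $1$ and fails to recover the stated constant only when $r<1/\sqrt{2}$. Since the precise constant is never used downstream (only boundedness of $M_f$ is needed in Lemma~\ref{9}), this discrepancy is harmless. The paper's Schur-product argument is shorter and more conceptual; your approach has the virtue of being entirely elementary, avoiding the kernel characterization of multiplier norms.
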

\begin{proof} (Note that there is a slight abuse of notation here, as we are identifying $H^2(\DD)$ with the Hilbert function space consisting of the restrictions of all $H^2(\DD)$ functions to $A_r.$ The kernel of that space is, of course, the restriction of the Szegő kernel for $\DD$ to $A_r\times A_r.)$\\
Put $M=\text{Mult}(H^2(\DD),\mathscr{H}^2(A_r)).$ 
Since $1\in H^2(\DD),$ we obtain $M\subset \mathscr{H}^2(A_r).$ Now, let $h\in H^2(\DD_0)$.  By \cite[Theorem 3.11]{PauRa}, we obtain 
$$h(\lambda)\overline{h(\mu)}\le ||h||_{H^2(\DD_0)}k_{H^2(\DD_0)}(\lambda,\mu) \text{ in } \DD_0\times\DD_0 $$
$$\RR h(\lambda)\overline{h(\mu)}\le ||h||_{H^2(\DD_0)}k_{H^2(\DD_0)}(\lambda,\mu) \text{ in } A_r\times A_r               $$
$$\RR h(\lambda)\overline{h(\mu)}k_{H^2(\DD)}(\lambda,\mu)\le ||h||_{H^2(\DD_0)}k_{H^2(\DD_0)}(\lambda,\mu)k_{H^2(\DD)}(\lambda,\mu) \text{ in } A_r\times A_r                $$
$$\RR h(\lambda)\overline{h(\mu)}k_{H^2(\DD)}(\lambda,\mu)\le \frac{||h||_{H^2(\DD_0)}}{(1-r^2)}k_r(\lambda,\mu) \text{ in } A_r\times A_r.                $$
By \cite[Theorem 5.21]{PauRa}, this last positivity condition implies that $h\in M$ and also that $||h||_{M}\le ||h||_{H^2(\DD_0)}/(1-r^2).$  This gives us $H^2(\DD_0)\subset M.$ \\
Now, consider $g\in H^{\infty}(\DD).$ For every $f\in H^2(\DD),$ we have 
$$||gf||_{\mathscr{H}^2(A_r)}=||gf||_{H^2(\DD)}\le ||g||_{H^{\infty}(\DD)}||f||_{H^2(\DD)}. $$
This shows that $g\in M$ and also that $||g||_{M}=||g||_{H^{\infty}(\DD)}.$ We conclude that $H^2(\DD_0)+H^{\infty}(\DD)\subset M.$ \\
For the converse, let $f\in M\subset \mathscr{H}^2(A_r).$ Choose any function $f_1\in H^2(\DD)$ with the property that $f-f_1\in H^2(\DD_0).$ By our previous observations, we obtain $f-f_1\in M.$ Also, since $f_1=f-(f-f_1)\in M\cap H^2(\DD)$ and $H^2(\DD)$ is contained isometrically in $\mathscr{H}^2(A_r),$ we have $f_1\in\text{Mult}(H^2(\DD))\subset M$ and also  $||f_1||_{M}=||f_1||_{H^{\infty}(\DD)}.$ We can thus write:
$$||f||_{M}=||f_1+(f-f_1)||_{M}\le ||f_1||_{M}+||f-f_1||_{M}\le ||f_1||_{H^{\infty}(\DD)}+\frac{||f-f_1||_{H^2(\DD_0)}}{1-r^2}.$$
\end{proof}

\begin{lemma} \label{9}
Let $f,g \in H^2(\DD)$ and suppose that $$\langle\phi f,f\rangle_{H^2(\DD)}=\langle\phi g,g\rangle_{H^2(\DD)}, \text{ for all } \phi\in H^{\infty}(\DD).$$
Then, 
 $$\langle\phi f,f\rangle_{\mathscr{H}^2(A_r)}=\langle\phi g,g\rangle_{\mathscr{H}^2(A_r)}, \text{ for all } \phi\in \text{Mult}(\mathscr{H}^2(A_r)).$$
\end{lemma}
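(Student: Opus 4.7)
The plan is to reduce to the case of Laurent monomials $\phi = z^n$, $n \in \Z$, and then pass from Laurent polynomials to an arbitrary multiplier by a Laurent--Fej\'er approximation argument. First I would verify the identity
$\langle z^n f, f\rangle_{\mathscr{H}^2(A_r)} = \langle z^n g, g\rangle_{\mathscr{H}^2(A_r)}$
for every $n \in \Z$. For $n \ge 0$, Lemma \ref{aaa}(i) says that $H^2(\DD)$ embeds \emph{isometrically} into $\mathscr{H}^2(A_r)$; polarization then tells us that the two inner products agree on pairs of $H^2(\DD)$-elements. Since $z^n \in H^{\infty}(\DD)$, the hypothesis directly gives the equality in this range.

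For $n = -m < 0$, writing $f = \sum_{k \ge 0} a_k z^k$, a direct computation with the weighted orthonormal Laurent basis of $\mathscr{H}^2(A_r)$ yields
$\langle z^{-m} f, f\rangle_{\mathscr{H}^2(A_r)} = \sum_{l \ge 0} a_{l+m}\bar{a}_l = \overline{\langle z^m f, f\rangle_{H^2(\DD)}}$;
the key point is that $f$ has no negative Laurent coefficients, so the $r^{2l}$-weighted tail of the sum vanishes identically. Applying the hypothesis with $z^m \in H^{\infty}(\DD)$ and then conjugating, the analogous identity for $g$ follows. By linearity, I then get $\langle p f, f\rangle_{\mathscr{H}^2(A_r)} = \langle p g, g\rangle_{\mathscr{H}^2(A_r)}$ for every Laurent polynomial $p$.

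To pass to a general $\phi \in \text{Mult}(\mathscr{H}^2(A_r)) = H^{\infty}(A_r)$ with Laurent series $\phi(z) = \sum_n c_n z^n$, I would form the Laurent--Fej\'er means $\sigma_N \phi(z) := \sum_{|n| \le N}\!\bigl(1 - |n|/(N+1)\bigr) c_n z^n$, which admit the integral representation $\sigma_N \phi(z) = \frac{1}{2\pi}\int_{-\pi}^{\pi} F_N(\theta) \phi(e^{i\theta} z)\, d\theta$, where $F_N$ is the Fej\'er kernel. Positivity of $F_N$ and the rotational invariance of $A_r$ give $\|\sigma_N \phi\|_{H^{\infty}(A_r)} \le \|\phi\|_{H^{\infty}(A_r)}$, while Fej\'er's theorem applied on each circle $|z| = \rho$ with $r < \rho < 1$ yields pointwise convergence $\sigma_N \phi(z) \to \phi(z)$ on $A_r$. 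The multiplier-norm bound $\|\cdot\|_{\text{Mult}} \le \sqrt{2}\,\|\cdot\|_{\infty}$ from the preliminaries then shows that $\{(\sigma_N\phi)\, f\}$ is a uniformly norm-bounded sequence in $\mathscr{H}^2(A_r)$ converging pointwise to $\phi f$. Testing against the reproducing kernels $k_\mu$ (whose linear span is dense in $\mathscr{H}^2(A_r)$) and using the uniform bound, one concludes $(\sigma_N\phi) f \rightharpoonup \phi f$ weakly; in particular $\langle (\sigma_N\phi) f, f\rangle \to \langle \phi f, f\rangle$, and analogously for $g$. Combined with the Laurent-polynomial case, this closes the argument. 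The main delicate point is the very last step: establishing weak convergence of the vectors $(\sigma_N\phi) f$ from their pointwise convergence; however, once the uniform multiplier-norm bound is in hand, this is routine RKHS folklore.
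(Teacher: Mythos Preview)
Your argument is correct but follows a genuinely different route from the paper's. The paper does not approximate $\phi$ by Laurent--Fej\'er means; instead it splits $\phi=\phi_1+\phi_2$ with $\phi_1\in H^{\infty}(\DD)$ and $\phi_2\in \frac{1}{z}H^{\infty}(\DD_0)$, handles $\langle\phi_1 f,f\rangle$ directly via the isometric embedding and the hypothesis, and for $\phi_2$ uses Lemma~\ref{8} (the $\DD_0$-version) to see that $f\in H^2(\DD)\subset\text{Mult}(H^2(\DD_0),\mathscr{H}^2(A_r))$, so that the partial Laurent sums of $\phi_2 f$ converge to $\phi_2 f$ in \emph{norm}. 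Your reduction to monomials is identical in spirit to the paper's computation $\langle z^{n} f,f\rangle_{\mathscr{H}^2(A_r)}=\langle f,z^{-n}f\rangle_{H^2(\DD)}$ for $n<0$, but your passage to general $\phi$ trades Lemma~\ref{8} and norm convergence for the cruder bound $\|\cdot\|_{\text{Mult}}\le\sqrt{2}\,\|\cdot\|_{\infty}$ plus weak convergence via reproducing kernels. The paper's route is shorter once Lemma~\ref{8} is available and gives a stronger mode of convergence; yours is more self-contained and would work in any rotation-invariant CP space on an annulus without first proving a multiplier lemma like Lemma~\ref{8}.
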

\begin{proof}
Suppose $f,g$ satisfy the given assumptions and let $\phi \in H^{\infty}(A_r)=\text{Mult}(\mathscr{H}^2(A_r)).$ Write $\phi=\phi_1+\phi_2,$ where $\phi_1\in {H^2(\DD)}$ and $\phi_2=\sum_{-\infty}^{n=-1}a_n z^n\in \frac{1}{z}H^2(\DD_0).$ By the maximum modulus principle, we obtain  $\phi_1 \in H^{\infty}(\DD), \phi_2\in H^{\infty}(\DD_0).$ 
Now, notice that 
$$\lim_{k\to -\infty}\sum_{k}^{n=-1}a_n z^n f=\Bigg(\sum_{-\infty}^{n=-1}a_n z^n \Bigg)f$$
in the $||^.||_{\mathscr{H}^2(A_r)}$ norm, as $H^2(\DD)\subset \text{Mult} (H^2(\DD_0), \mathscr{H}^2(A_r)),$ due to Lemma \ref{8}. We can write 
$$\langle \phi f,f\rangle_{\mathscr{H}^2(A_r)}=\langle \phi_1 f,f\rangle_{\mathscr{H}^2(A_r)}+\langle \phi_2 f,f\rangle_{\mathscr{H}^2(A_r)},$$ where

    $$\langle \phi_1 f,f\rangle_{\mathscr{H}^2(A_r)}=\langle \phi_1 f,f\rangle_{H^2(\DD)} $$
    $$=\langle \phi_1 g,g\rangle_{H^2(\DD)} \text{ (by assumption) } $$
    $$=\langle \phi_1 g,g\rangle_{\mathscr{H}^2(A_r)}$$
    and also 
    $$\langle \phi_2 f,f\rangle_{\mathscr{H}^2(A_r)} =\Bigg\langle \Bigg(\lim_{k\to -\infty}\sum_{k}^{n=-1}a_n z^n \Bigg)f,f\Bigg\rangle_{\mathscr{H}^2(A_r)}$$
    $$=\lim_{k\to -\infty} \sum_{k}^{n=-1}a_n \langle z^n f,f\rangle_{\mathscr{H}^2(A_r)} $$
    
    $$=\lim_{k\to -\infty} \sum_{k}^{n=-1}a_n \langle  f,z^{-n}f\rangle_{H^2(\DD)}  $$
    $$=\lim_{k\to -\infty} \sum_{k}^{n=-1}a_n \langle  g,z^{-n}g\rangle_{H^2(\DD)}  ,$$
    where the last equality is because of our initial assumptions. Working our way in the opposite direction, we can then show that 
    $$\lim_{k\to -\infty} \sum_{k}^{n=-1}a_n \langle  g,z^{-n}g\rangle_{H^2(\DD)}=\langle \phi_2 g,g\rangle_{\mathscr{H}^2(A_r)} .$$
Hence, $$\langle \phi f,f\rangle_{\mathscr{H}^2(A_r)}=\langle \phi_1 g,g\rangle_{\mathscr{H}^2(A_r)}+\langle \phi_2 g,g\rangle_{\mathscr{H}^2(A_r)}=\langle \phi g,g\rangle_{\mathscr{H}^2(A_r)},$$which concludes the proof.
\end{proof}

We can now characterize those functions $f\in H^2(\DD)\subset\mathscr{H}^2(A_r)$ that are $\mathscr{H}^2(A_r)$-free outer.
\begin{theorem} \label{11}
Suppose $f\in H^2(\DD).$ Then, $f$ is $H^2(\DD)$-outer if and only if $f$ is $\mathscr{H}^2(A_r)$-free outer. 
An analogous result holds for $H^2(\DD_0)$-outer functions.
\end{theorem}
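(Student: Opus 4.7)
The plan is to prove the two directions of the stated equivalence separately, handling the easier implication first.

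For the implication ``$\mathscr{H}^2(A_r)$-free outer $\Rightarrow$ $H^2(\DD)$-outer,'' I argue by contrapositive. Suppose $f\in H^2(\DD)$ is not $H^2(\DD)$-outer and let $f=\phi_0 h_0$ be its classical inner-outer factorization, with $\phi_0$ a nonconstant inner function and $h_0$ outer. Since $|h_0|=|f|$ a.e.\ on $\partial\DD$, we have $h_0\in\mathcal{E}_f^{H^2(\DD)}$, and Lemma \ref{9} lifts this to $h_0\in\mathcal{E}_f^{\mathscr{H}^2(A_r)}$. As $\phi_0$ is nonconstant inner, $|\phi_0(z_0)|<1$ for every $z_0\in\DD\supset A_r$; hence $|h_0(z_0)|>|f(z_0)|$ throughout $A_r$, which by Corollary \ref{2} contradicts the $\mathscr{H}^2(A_r)$-free outerness of $f$.

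For the reverse implication, assume $f\in H^2(\DD)$ is $H^2(\DD)$-outer, and take its $\mathscr{H}^2(A_r)$-subinner/free outer factorization $f=\psi h$ given by Theorem \ref{3}. The goal is to show that $\psi$ is a unimodular constant. The central claim is that the free outer factor $h$ lies in $H^2(\DD)$; granting it, the first implication, applied to $h$ (which is $\mathscr{H}^2(A_r)$-free outer by construction), yields that $h$ is $H^2(\DD)$-outer and thus zero-free on $\DD$. The ratio $\psi=f/h$ is therefore analytic on $\DD$, and applying the maximum modulus principle on $\{|z|\le r\}$ together with Lemma \ref{aaa} gives $\psi\in H^\infty(\DD)$ with $\|\psi\|_{H^\infty(\DD)}=\|\psi\|_{\text{Mult}(\mathscr{H}^2(A_r))}=1$. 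The norm attainment $\|\psi h\|_{\mathscr{H}^2(A_r)}=\|h\|_{\mathscr{H}^2(A_r)}$ then becomes, via Lemma \ref{aaa}(i), the identity
\[
\int_{\partial\DD}(1-|\psi|^2)|h|^2\,\frac{d\theta}{2\pi}=0,
\]
which forces $|\psi|=1$ a.e.\ on $\partial\DD$ (since the outer function $h$ is nonvanishing a.e.), so $\psi$ is $H^2(\DD)$-inner. Uniqueness of the classical inner-outer factorization together with the $H^2(\DD)$-outerness of $f$ then forces $\psi$ to be a unimodular constant, as desired.

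The principal obstacle is therefore the claim that the $\mathscr{H}^2(A_r)$-free outer factor $h$ of $f\in H^2(\DD)$ lies in $H^2(\DD)$. I plan to approach this by writing out the moment identities $\langle\phi f,f\rangle_{\mathscr{H}^2(A_r)}=\langle\phi g,g\rangle_{\mathscr{H}^2(A_r)}$ for the multipliers $\phi=z^{-n}\in H^\infty(\DD_0)\subset\text{Mult}(\mathscr{H}^2(A_r))$, $n\ge 1$, applied to an arbitrary $g=g_++g_-\in\mathcal{E}_f^{\mathscr{H}^2(A_r)}$ with $g_+\in H^2(\DD)$ and $g_-\in z^{-1}H^2(\DD_0)$. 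These identities express the classical $H^2(\DD)$-autocorrelations of $f$ in terms of the autocorrelations of $g_+$ plus explicit error terms bilinear in the Laurent coefficients of $g_-$ and weighted by large powers of $1/r$; an extremality argument, evaluating $|g(z_0)|$ at a suitable $z_0\in A_r$, should then force $g_-=0$ for the extremal element $h$. The analogous result for $H^2(\DD_0)$-outer functions follows from the treated case by applying the reflection symmetry $z\mapsto r/z$ of $A_r$, which exchanges the roles of $H^2(\DD)$ and $H^2(\DD_0)$.
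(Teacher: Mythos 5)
Your first implication (free outer $\Rightarrow$ outer, argued by contrapositive via Lemma \ref{9} and Corollary \ref{2}) is exactly the paper's argument and is fine. The problem is the converse. Your reduction is sound \emph{once} you know that the $\mathscr{H}^2(A_r)$-free outer factor $h$ of $f$ lies in $H^2(\DD)$: from there, applying the first implication to $h$, dividing, and using Lemma \ref{aaa} to turn the norm attainment into $\int_{\partial\DD}(1-|\psi|^2)|h|^2\,d\theta=0$ does legitimately force $\psi$ to be inner and hence a unimodular constant. But the claim $h\in H^2(\DD)$ is precisely the crux, and what you offer for it --- moment identities for $\phi=z^{-n}$ plus an unspecified ``extremality argument'' that ``should force $g_-=0$'' --- is a plan, not a proof. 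It is not at all clear how comparing autocorrelations weighted by powers of $1/r$ rules out a nonzero singular part for the maximizer of $|g(z_0)|$ over $\mathcal{E}_f$; you would need a genuine variational step (given $g\in\mathcal{E}_f$ with $g_-\neq 0$, produce $g'\in\mathcal{E}_f$ with $|g'(z_0)|$ strictly larger), and nothing in your sketch indicates how to build it. As written, the converse direction is incomplete.

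The paper closes exactly this gap with a short trick you are missing: multiplication by $z$ is isometric on $H^2(\DD)\subset\mathscr{H}^2(A_r)$ but \emph{strictly} decreases the $\mathscr{H}^2(A_r)$-norm of any function with a nonzero negative Laurent coefficient, since each weight $r^{2n}$ with $n<0$ drops to $r^{2n+2}$ or to $1$. If $h\notin H^2(\DD)$ one gets
$$\|f\|_{\mathscr{H}^2(A_r)}=\|zf\|_{\mathscr{H}^2(A_r)}=\|\psi(zh)\|_{\mathscr{H}^2(A_r)}\le\|zh\|_{\mathscr{H}^2(A_r)}<\|h\|_{\mathscr{H}^2(A_r)}=\|f\|_{\mathscr{H}^2(A_r)},$$
a contradiction, using only $\|\psi\|_{\text{Mult}(\mathscr{H}^2(A_r))}\le 1$, $\|h\|_{\mathscr{H}^2(A_r)}=\|f\|_{\mathscr{H}^2(A_r)}$, and $\|zf\|_{H^2(\DD)}=\|f\|_{H^2(\DD)}$ together with Lemma \ref{aaa}. (The paper then finishes differently from you, arguing by contradiction with Corollary \ref{2} rather than showing $\psi$ is constant, but your endgame would also work once $h\in H^2(\DD)$ is secured.) Substituting this argument for your sketched one repairs the proof.
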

\begin{proof}
Fix an arbitrary $z_0\in A_r.$\\
First, suppose that $f\in H^2(\DD)$ is not $H^2(\DD)$-outer and also let $g$ denote the $H^2(\DD)$-outer factor of $f.$ Since $|f|=|g|$ on $\partial\mathbb{D},$ we  obtain
$$\langle \phi f,f\rangle_{H^2(\DD)}=\langle \phi g,g\rangle_{H^2(\DD)}, \text { for all } \phi\in H^{\infty}(\DD),$$
and also $|g(z_0)|>|f(z_0)|.$
But then, Lemma \ref{9} tells us that 
 $$\langle\phi f,f\rangle_{\mathscr{H}^2(A_r)}=\langle\phi g,g\rangle_{\mathscr{H}^2(A_r)}, \text{ for all } \phi\in H^{\infty}(A_r)$$
 $$\RR P_{f}=P_{g} \hspace{0.1 cm} \text{ in } \mathscr{H}^2(A_r).$$
 Since $|g(z_0)|>|f(z_0)|,$ Corollary \ref{2} implies that $f$ is not $\mathscr{H}^2(A_r)$-free outer. \\
 Conversely, suppose that $f$ is $H^2(\DD)$-outer. Suppose also that $f$ is not $\mathscr{H}^2(A_r)$-free outer. We will reach a contradiction.\\
 Write $f=gh$ for the subinner-free outer factorization of $f$ in $\mathscr{H}^2(A_r).$ We claim that $h\notin H^2(\DD).$ \\
 Indeed, if $h\in H^2(\DD)$ we can write 
 $$ P_{f}=P_{h} \hspace{0.1 cm} \text{ in } \mathscr{H}^2(A_r) \text{ (as $h$ is the free outer factor of $f$) } $$
 $$\RR \langle\phi f,f\rangle_{\mathscr{H}^2(A_r)}=\langle\phi h,h\rangle_{\mathscr{H}^2(A_r)}, \text{ for all } \phi\in H^{\infty}(A_r)  $$
 $$\RR \langle\phi f,f\rangle_{H^2(\DD)}=\langle\phi h,h\rangle_{H^2(\DD)}, \text{ for all } \phi\in H^{\infty}(\DD) $$
 $$\RR P_{f}=P_{g} \hspace{0.1 cm} \text{ in } H^2(\DD)$$
 $$\RR |f(z)|\ge |h(z)|, \text{ for all } z\in \DD, $$
 as $f$ is $H^2(\DD)$-outer. But $h$ is the free outer factor of $f$ and $f$ is \textit{not} $\mathscr{H}^2(A_r)$-free outer, so Corollary \ref{2} implies that $|h(z_0)|>|f(z_0)|, $ a contradiction. \\
 Hence, $h\notin H^2(\DD).$ This implies the existence of $k<0$ such that the term $a_k z^k$ in the Laurent expansion of $h$ is nonzero. Thus $||zh||_{\mathscr{H}^2(A_r)}<||h||_{\mathscr{H}^2(A_r)}.$ But then, we obtain 
$$||f||_{\mathscr{H}^2(A_r)}=||f||_{H^2(\DD)}=||zf||_{H^2(\DD)}$$
$$=||zf||_{\mathscr{H}^2(A_r)}=||g(zh)||_{\mathscr{H}^2(A_r)}\le ||zh||_{\mathscr{H}^2(A_r)} $$
$$<||h||_{\mathscr{H}^2(A_r)}=||f||_{\mathscr{H}^2(A_r)}, $$
a contradiction again. Thus, $f$ must be $\mathscr{H}^2(A_r)$-free outer and our proof is complete.

\end{proof}
We now show Theorem \ref{100} from the Introduction. 
\begin{proof}[Proof of Theorem \ref{100}]
Let $f\in H^2(\DD)$ and write $f=\phi h$ for the classical inner-outer factorization in $H^2(\DD).$ By Theorem \ref{11}, $h$ is $\mathscr{H}^2(A_r)$-free outer, while $\phi$ is subinner by Proposition \ref{12}. Notice also that $||f||_{\mathscr{H}^2(A_r)}=||h||_{\mathscr{H}^2(A_r)}$ and so $f=\phi h$ coincides (up to multiplication by unimodular constants) with the subinner-free outer factorization of $f$ in $\mathscr{H}^2(A_r)$.
\end{proof}

We have proved that a function in $\mathscr{H}^2(A_r)$ that is also analytic on the unit disk is $\mathscr{H}^2(A_r)$-free outer if and only if it is $H^2(\mathbb{D})$-outer. Theorem \ref{69} says that an analogous result holds for $\mathscr{H}^2(A_r)$-subinner functions that are also analytic on $\mathbb{D}.$ The key result used in the proof is a modified subinner-free outer factorization
for functions in spaces whose reproducing kernel has a CP
factor.

\begin{proof}[Proof of Theorem \ref{69}]
Suppose $\phi\in H^{\infty}(\mathbb{D})$. \par
If $\phi$ is $H^2(\DD)$-inner, then it must also be $\mathscr{H}^2(A_r)$-subinner, by Proposition \ref{12}. \par 
Now, suppose that $\phi$ is $\mathscr{H}^2(A_r)$-subinner. Thus, $||\phi||_{H^{\infty}(\mathbb{D})}=1$ (by Lemma \ref{aaa}) and also there exists a nonzero $f\in \mathscr{H}^2(A_r)$ such that $$||\phi f||_{\mathscr{H}^2(A_r)}=||f||_{\mathscr{H}^2(A_r)}.$$ Letting $s$ denote the classical Sz\'{e}go kernel on $\mathbb{D},$ it is easy to see that $k_r/s$ is positive semi-definite on $A_r\times A_r.$ Hence, by \cite[Theorem 1.10]{freeouterdraft}, we can find a (unique, up to multiplication by unimodular constants) pair of nonzero functions $\psi\in\text{Mult}(H^2(\mathbb{D}), \mathscr{H}^2(A_r))$ and $h\in H^2(\mathbb{D})$ such that 
\begin{itemize}
    \item [i)]
    $f=\psi h,$
    
     \item [ii)]
     $||h||_{H^2(\mathbb{D})}=||f||_{\mathscr{H}^2(A_r)},$
     
      \item [iii)]
      $h$ is $H^2(\mathbb{D})$-free outer,

       \item [iv)]
       $||\psi||_{\text{Mult}(H^2(\mathbb{D}), \mathscr{H}^2(A_r))}\le 1.$
\end{itemize}
Since $\phi h\in H^2(\mathbb{D})$, we can write 
$$||f||_{\mathscr{H}^2(A_r)}=||\phi f ||_{\mathscr{H}^2(A_r)}=|| \phi \psi h||_{\mathscr{H}^2(A_r)}=||\psi(\phi h) ||_{\mathscr{H}^2(A_r)} $$
$$\le ||\phi h ||_{H^2(\mathbb{D})} \hspace*{1.5 cm} \text{(by item (iv))}$$
$$\le ||h ||_{H^2(\mathbb{D})} \hspace*{0.5 cm} \text{(since $||\phi||_{H^{\infty}(\mathbb{D})}=1$)}$$
$$= ||f||_{\mathscr{H}^2(A_r)} \hspace*{1.5 cm} \text{(by item (ii))}$$
Thus, we must have $||\phi h||_{H^2(\mathbb{D})}=||h||_{H^2(\mathbb{D})}$, which implies that $\phi$ is $H^2(\mathbb{D})$-inner.

\end{proof}

\par

We now combine the embedding (\ref{20}) of $\mathscr{H}^2(A_r)$ into $H^2_2$ with Theorem \ref{100} to obtain new examples of free outer functions in $H^2_2$.

\begin{corollary} \label{16}
Let $f=\sum_{n=0}^{\infty}a_n z^n$ be an $H^2(\DD)$-outer function. Then, for every $r\in(0,1),$ the function $F\in H^2_2$ uniquely defined by  
$$\langle F,z^i_1 z^j_2\rangle_{H^2_2}=\frac{1-r^2}{1+r^2}\frac{r^j}{\big(\sqrt{r^2+1}\big)^{i+j}}a_{i-j}, \hspace{0.2 cm} (\text{for all } i,j\ge 0)$$
is $H_2^2$-free outer (put $a_{k}=0$ for negative $k$). 
Hence, if $T: H^2_d \to \mathcal{F}^2_d$ denotes the natural isometric embedding of $H^2_d$ into $\mathcal{F}^2_d$, $T(F)$ is both left- and right-outer in $\mathcal{F}^2_d.$
An analogous result holds for $H^2(\DD_0)$-outer functions.
\end{corollary}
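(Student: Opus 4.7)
The plan is to chain Theorem \ref{100} with the CP embedding of equation (\ref{20}) and Theorem \ref{15}. Since $f$ is $H^2(\DD)$-outer, it is $\mathscr{H}^2(A_r)$-free outer by Theorem \ref{100}, and I will transport this property to $H^2_2$ along the natural isometric embedding $U:\mathscr{H}^2(A_r)\to\mathcal{F}^2_2$ induced by the map $u$. The explicit formula for $F$ will then drop out of pairing $Uf$ against monomials of $H^2_2$ via $U^*=C_u$.

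First, by Lemma \ref{aaa}(i), $f\in H^2(\DD)$ sits isometrically inside $\mathscr{H}^2(A_r)$, and Theorem \ref{100} upgrades its $H^2(\DD)$-outerness to $\mathscr{H}^2(A_r)$-free outerness. Equation (\ref{20}) exhibits $k_r$ as $c:=(1-r^2)/(1+r^2)$ times the normalized CP kernel $k_2(u(\cdot),u(\cdot))$; the remark at the end of Section \ref{123} tells us this rescaling is harmless for free-outerness. Theorem \ref{15}(d) then yields that $S^{-1}(Uf) \in H^2_2$ is free outer.

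The heart of the argument is the identification $F = S^{-1}(Uf)$, which is verified by pairing against monomials. Since $T(z_1^i z_2^j) \in \mathcal{H}^2_2$ already, $S$ acts as $T$ on these monomials, and the definition (\ref{30}) gives
$$T(z_1^i z_2^j)(x) = \frac{i!\,j!}{(i+j)!}\sum_{w:\,\lambda(w)=(i,j)} x^w.$$
Evaluation at the commutative point $u(z)=\bigl(z/\sqrt{r^2+1},\,r/(z\sqrt{r^2+1})\bigr)$ collapses the sum to
$$T(z_1^i z_2^j)(u(z)) = u_1(z)^i u_2(z)^j = \frac{r^j}{(\sqrt{r^2+1})^{i+j}}\,z^{i-j}.$$
Combining $U^*=C_u$, the scalar $c$ picked up from the kernel rescaling when translating $\langle\cdot,\cdot\rangle_{\mathcal{F}^2_2}$ to $\langle\cdot,\cdot\rangle_{\mathscr{H}^2(A_r)}$, and the fact that $\langle f, z^{i-j}\rangle_{\mathscr{H}^2(A_r)}$ equals $a_{i-j}$ for $i\geq j$ and vanishes for $i<j$ (as $f$ has only nonnegative Taylor coefficients and $\|z^n\|_{\mathscr{H}^2(A_r)}=1$ for $n\geq 0$), the pairing
$$\bigl\langle S^{-1}(Uf),\, z_1^i z_2^j\bigr\rangle_{H^2_2}\;=\;\bigl\langle Uf,\, T(z_1^i z_2^j)\bigr\rangle_{\mathcal{F}^2_2}$$
reduces to the formula in the statement, confirming $F = S^{-1}(Uf)$. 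The closing assertion about $T(F)$ follows because $T(F)=S(F)=Uf$, so the equivalences (a)$\Leftrightarrow$(b)$\Leftrightarrow$(c) of Theorem \ref{15} immediately give both left- and right-outerness of $T(F)$ in $\mathcal{F}^2_2$.

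The main obstacle is bookkeeping the scalar $c=(1-r^2)/(1+r^2)$ through the composition of identifications from $\mathscr{H}^2(A_r)$ to $\mathcal{F}^2_2$ to $H^2_2$. The underlying CP machinery already guarantees that the image of $f$ must be free outer; the content of Corollary \ref{16} is the explicit coefficient formula, and this is pure unwinding of definitions once the normalization factor has been placed correctly.
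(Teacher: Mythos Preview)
Your proposal is correct and follows essentially the same route as the paper: invoke Theorem \ref{100} to get $\mathscr{H}^2(A_r)$-free outerness, pass through the embedding $U$ via Theorem \ref{15} to obtain $H^2_2$-free outerness of $F=S^{-1}(Uf)$, and then compute $\langle F, z_1^i z_2^j\rangle_{H^2_2}$ using $U^*=C_u$. The only cosmetic difference is that the paper makes the normalization step explicit by introducing the rescaled kernel $k'_r=\frac{1+r^2}{1-r^2}k_r$ and working in the associated space $\mathcal{H}_{k'_r}$ (whose inner product is $c=\frac{1-r^2}{1+r^2}$ times that of $\mathscr{H}^2(A_r)$), whereas you fold this into a single ``scalar $c$ picked up from the kernel rescaling''; both arrive at the same formula.
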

 \begin{proof}
We only show the $H^2(\DD)$ version. Suppose $f=\sum_{n=0}^{\infty}a_n z^n$ is $H^2(\DD)$-outer. By our previous results, $f$ will be $\mathscr{H}^2(A_r)$-free outer. Consider the rescaled kernel defined on $A_r\times A_r$ by
\begin{equation} \label{40} k'_r(\lambda,\mu):=\frac{1+r^2}{1-r^2}k_r(\lambda,\mu)=\frac{1+r^2}{(1-\lambda\overline{\mu})(1-r^2/\lambda\overline{\mu})}=k_2(u(\lambda), u(\mu)),\end{equation}
where $k_2$ is the kernel of $H^2_2.$ $k'_r$ is now a normalized CNP kernel. It induces the holomorphic function space  $\mathcal{H}_{k'_r}$ on $A_r,$ the norm of which is simply the norm of $\mathscr{H}^2(A_r)$ multiplied by $\sqrt{(1-r^2)/(1+r^2)}$. We easily see that $f$ will be $\mathcal{H}_{k'}$-free outer as well. \\
Now, recall the isometric identification $S=P_{\mathcal{H}^2_d}T$ (see (\ref{30})) of $H^2_d$ with the symmetric Fock space $\mathcal{H}^2_d$. By Theorem \ref{15}, we obtain that $F:=S^{-1}Uf$ is $H^2_2$-free outer, where $U:\mathcal{H}_{k'}\to\mathcal{F}^2_2$ denotes the embedding of $\mathcal{H}_{k'}$ into $\mathcal{H}^2_2$ induced by $(\ref{40}).$ This function satisfies (for all $i,j\ge 0$):
$$\langle F,z^i_1z^j_2\rangle_{H^2_2}=\langle F,S^{-1}S(z^i_1z^j_2)\rangle_{H^2_2}$$ $$=\langle f,U^*S(z^i_1z^j_2)\rangle_{\mathcal{H}_{k'}}  $$
$$=\bigg\langle f, \bigg(\frac{z}{\sqrt{r^2+1}}\bigg)^i\bigg(\frac{r}{\sqrt{r^2+1}}\frac{1}{z}\bigg)^j  \bigg\rangle_{\mathcal{H}_{k'}} $$
$$=\frac{1-r^2}{1+r^2}\bigg\langle f,\frac{r^j}{\big(\sqrt{r^2+1}\big)^{i+j}}z^{i-j} \bigg\rangle_{\mathscr{H}^2(A_r)} $$
$$=\frac{1-r^2}{1+r^2}\frac{r^j}{\big(\sqrt{r^2+1}\big)^{i+j}}a_{i-j},$$
for any $r\in (0,1),$ as desired (for negative $k$, interpret $a_k$ as being zero). The rest of the Corollary follows immediately from Theorem \ref{15}.
\end{proof}

\begin{exmp} \label{90}
Consider the function $f(z)=z-\lambda.$ This function is 
\begin{itemize}
\item $\mathscr{H}^2(A_r)$-subinner if and only if $\lambda=0$ (by Theorem \ref{69});
    \item  $\mathscr{H}^2(A_r)$-free outer, if $|\lambda|\ge 1$  (by Theorem \ref{11});
    \item not $\mathscr{H}^2(A_r)$-cyclic (and hence \textit{not} $\mathscr{H}^2(A_r)$-free outer), if $r<|\lambda|<1;$
    \item $\mathscr{H}^2(A_r)$-cyclic but \textit{not} $\mathscr{H}^2(A_r)$-free outer, if $|\lambda|\le r$ (by Theorem \ref{11}).
\end{itemize} 
\end{exmp}
To conclude, we have a complete characterization of subinner and free outer functions in $\mathscr{H}^2(A_r)\cap\text{Hol}(\DD)$. Of course, it would be even more interesting if we were able to describe generic subinner and/or free outer functions in $\mathscr{H}^2(A_r).$ We pose this as a question. 

\begin{question}
Let $f\in \mathscr{H}^2(A_r)$. What is the subinner-free outer factorization of $f$?
\end{question}

\printbibliography

\end{document}